\begin{document}

	
	\renewcommand{\d}{d}
	
	\newcommand{\E}{\mathbb{E}}
	\newcommand{\PP}{\mathbb{P}}
	\newcommand{\DD}{\mathbb{D}}
	\newcommand{\R}{\mathbb{R}}
	\newcommand{\cD}{\mathcal{D}}
	\newcommand{\cF}{\mathcal{F}}
	\newcommand{\cK}{\mathcal{K}}
	\newcommand{\N}{\mathbb{N}}
	\newcommand{\fracs}[2]{{ \textstyle \frac{#1}{#2} }}
	\newcommand{\sign}{\operatorname{sign}}
	
	\newtheorem{theorem}{Theorem}[section]
	\newtheorem{lemma}[theorem]{Lemma}
	\newtheorem{coro}[theorem]{Corollary}
	\newtheorem{defn}[theorem]{Definition}
	\newtheorem{assp}[theorem]{Assumption}
	\newtheorem{expl}[theorem]{Example}
	\newtheorem{prop}[theorem]{Proposition}
	\newtheorem{proposition}[theorem]{Proposition}
	\newtheorem{corollary}[theorem]{Corollary}
	\newtheorem{rmk}[theorem]{Remark}
	\newtheorem{notation}[theorem]{Notation}

	\def\a{\alpha} \def\g{\gamma}
	\def\e{\varepsilon} \def\z{\zeta} \def\y{\eta} \def\o{\theta}
	\def\vo{\vartheta} \def\k{\kappa} \def\l{\lambda} \def\m{\mu} \def\n{\nu}
	\def\x{\xi}  \def\r{\rho} \def\s{\sigma}
	\def\p{\phi} \def\f{\varphi}   \def\w{\omega}
	\def\q{\surd} \def\i{\bot} \def\h{\forall} \def\j{\emptyset}
	
	\def\be{\beta} \def\de{\delta} \def\up{\upsilon} \def\eq{\equiv}
	\def\ve{\vee} \def\we{\wedge}
	
	\def\t{\tau}
	
	\def\F{{\cal F}}
	\def\T{\tau} \def\G{\Gamma}  \def\D{\Delta} \def\O{\Theta} \def\L{\Lambda}
	\def\X{\Xi} \def\S{\Sigma} \def\W{\Omega}
	\def\M{\partial} \def\N{\nabla} \def\Ex{\exists} \def\K{\times}
	\def\V{\bigvee} \def\U{\bigwedge}
	
	\def\1{\oslash} \def\2{\oplus} \def\3{\otimes} \def\4{\ominus}
	\def\5{\circ} \def\6{\odot} \def\7{\backslash} \def\8{\infty}
	\def\9{\bigcap} \def\0{\bigcup} \def\+{\pm} \def\-{\mp}
	\def\<{\langle} \def\>{\rangle}
	
	\def\lev{\left\vert} \def\rev{\right\vert}
	\def\1{\mathbf{1}}

	\newcommand\wD{\widehat{\D}}
	\newcommand\EE{\mathbb{E}}
	
	\newcommand{\ls}[1]{\textcolor{red}{\tt Lukas: #1 }}

	\title{ \bf The  weak convergence order of two  Euler-type discretization schemes for the log-Heston model}

	\author{Annalena Mickel \footnote{Mathematical Institute and   DFG Research Training Group 1953, 
			University of Mannheim,  B6, 26, D-68131 Mannheim, Germany,  \texttt{amickel@mail.uni-mannheim.de}}
		\and   Andreas Neuenkirch  \footnote{Mathematical Institute, 
			University of Mannheim,  B6, 26, D-68131 Mannheim, Germany, \texttt{aneuenki@mail.uni-mannheim.de}} 
	}

	\date{\today}

	\maketitle
	
	\begin{abstract} 
		We study the weak convergence order of two Euler-type discretizations of the log-Heston Model where we use symmetrization and absorption, respectively, to prevent the discretization of the underlying CIR process from becoming negative. 
		If the  Feller index $\nu$  of the CIR process satisfies $\nu>1$, we establish weak convergence order  one, while for $\nu \leq 1$, we obtain weak convergence order $\nu-\epsilon$ for $\epsilon>0$ arbitrarily small.
		We illustrate our theoretical findings by several numerical examples.

		\medskip
		\noindent \textsf{{\bf Key words: } \em Heston model,  CIR process, discretization schemes for SDEs, Kolmogorov PDE, Euler scheme, non-standard assumptions \\
		}
		\medskip
		\noindent{\small\bf 2010 Mathematics Subject Classification:  60H35; 65C05; 65C30; 91G60 }
		
	\end{abstract}

	\section{Introduction and Main Result}
	The Heston model \cite{heston} is given by the stochastic differential equation (SDE)
	\begin{eqnarray}  \begin{split}
			\d S_t &= \mu S_t \d t + \sqrt{V_t} S_t  \Big(\rho \d W_t + \sqrt{1-\rho^2} \d B_t\Big), \qquad  & S_0=s_0, \\
			\d V_t &= \kappa(\theta - V_t) \d t + \sigma \sqrt{V_t} \d W_t,  \qquad  \qquad  \qquad \qquad  \,\,  & V_0=v_0,\label{hes-eq}
		\end{split}
	\end{eqnarray}
	with $s_0, v_0, \kappa, \theta, \sigma >0$, $\mu \in \mathbb{R}$,  $\rho \in [-1,1]$, $T>0$ and independent Brownian motions  $W=(W_t)_{t \in [0,T]}$,  $B=(B_t)_{t \in [0,T]}$ which are defined on a filtered probability space $(\Omega, \cF, (\cF_t)_{t\in[0,T]}, \mathbb{P})$ where the filtration satisfies the usual conditions. Throughout this article the initial values $s_0$, $v_0$ are assumed to be deterministic. 
	Here, $S$ models the price of an asset and $V$ its volatility which is given by the so called  Cox--Ingersoll--Ross process (CIR).
	
	The Heston model extends the celebrated Black-Scholes model by using a stochastic volatility instead of a constant one. Its numerical analysis  is not only of theoretical relevance. With the recent rise of volatility trading in financial markets, stochastic volatility models as the Heston model are becoming more and more important and numerical calculations are essential for the practical use of these models.

	To price options with maturity at time $T$  and discounted payoff function $g:[0,\infty) \rightarrow \mathbb{R}$, one is interested in the value of
	\begin{align*}
		\mathbb{E}\left[g(S_T)\right],
	\end{align*}
	which can be approximated through a (multi-level) Monte Carlo simulation.
	Usually, the log-Heston model  instead of the Heston model is considered in numerical practice. With $X_t=\log(S_t)$ this yields the SDE
	\begin{align} \begin{split}
			\d X_t &= \Big(\mu-\frac12 V_t\Big ) \d t + \sqrt{V_t} \Big(\rho \d W_t + \sqrt{1-\rho^2} \d B_t\Big), \\
			\d V_t &= \kappa (\theta-V_t) \d t + \sigma \sqrt{V_t} \d W_t, \label{hes-log} \end{split}
	\end{align}
	and the exponential is then incorporated in the payoff, i.e.\ $g$ is replaced by
	$h: \mathbb{R} \rightarrow  \mathbb{R}$ with $h(x)=g(\exp(x))$.
	
	Note that the state space of the log-Heston SDE \eqref{hes-log}  is $\mathbb{R} \times [0, \infty)$ and the square-root coefficients are not globally Lipschitz continuous.  Additionally, even if $g$ satisfies a linear growth condition, $g(S_T)$ might not belong to $L^p$ if $p>1$. See e.g. \cite{And-M} for this so-called  moment explosion. Thus, the (log-)Heston SDE does not satisfy the standard assumptions for the numerical analysis of SDEs.
	
	Due to its popularity and its difficulty various methods for the simulation of the (log-)Heston model have been proposed and studied:
	\begin{itemize}
		\item[(i)] exact simulations schemes which are based on the transition density of the CIR process and the characteristic function of the log-asset price process,  see e.g. \cite{BK,Smith,GlaKim, Wiese, Grezlak, Wiese2},
		\item[(ii)] semi-exact discretization schemes which simulate the CIR process exactly, but discretize the log-asset price, see e.g. \cite{And-H,zheng1,MN},
		\item[(iii)] full discretization schemes which discretize both components of the SDE, see   e.g.  \cite{Higham,KJ,And-H, NV, NN, lord-comparison, AltNeu}, 
		\item[(iv)] and miscellaneous methods as e.g. tree methods in \cite{tree1,Kirkby2} or the moment matching methods in \cite{And-H}.
	\end{itemize}

	Euler-type discretization schemes are frequently used  for the log-Heston model and seem to be efficient in many cases, but a weak error analysis, i.e. an analysis of the bias of these discretization schemes, is not available yet. Since the discretization of the CIR process might become negative, these schemes require fixes as absorption and symmetrization or a modification of the square-root coefficient for negative values which make these schemes applicable but  their analysis even more challenging. For a survey of such fixes and modifications see e.g. \cite{lord-comparison}. Moreover, for the importance of the weak error in (multi-level) Monte Carlo simulations see \cite{duffie_glynn} and \cite{giles}.

	Here, we will use two of these schemes for the discretization of the CIR component. The Symmetrized Euler (SE) scheme given  by
	\begin{equation}
		\begin{aligned}
			v^{sym}_{k+1}&=\biggl|v^{sym}_k+\kappa\left(\theta-v^{sym}_k\right)(t_{k+1}-t_k)+\sigma\sqrt{v^{sym}_k}\left(W_{t_{k+1}}-W_{t_k}\right)\biggr|,
		\end{aligned}
	\end{equation}
	where $0=t_0<t_1<...<t_k<...<t_N=T$ and $v^{sym}_{0}=v_0$, has been {{proposed and studied by  Bossy and Diop in \cite{BO}. They state the weak convergence order one (for  sufficiently regular test functions) under the assumption $ \frac{\kappa \theta}{\sigma^2} >1$ for this scheme.}}
	Instead of reflecting negative values, the Euler scheme with absorption or Absorbed  Euler (AE) scheme sets all occurring negative values of the  CIR discretization to zero, i.e.
	\begin{equation}
		v^{abs}_{k+1}=\left(v^{abs}_k+\kappa\left(\theta-v^{abs}_k\right)(t_{k+1}-t_k)+\sigma\sqrt{v^{abs}_k}\left(W_{t_{k+1}}-W_{t_k}\right)\right)^{+}
	\end{equation}
	and $v^{abs}_{0}=v_0$. {{
			Its exact origin is unknown and also no weak convergence rates are known for it.}}
	Using SE or AE, we  then discretize the log-price process with the standard Euler method:
	\begin{equation}\label{disc-heston}
		x_{k+1}=x_k+ \Big( \mu - \frac{1}{2} v_k \Big)(t_{k+1}-t_k)+\sqrt{v_k}\Big(\rho\left(W_{t_{k+1}}-W_{t_k}\right)+\sqrt{1-\rho^2}\left(B_{t_{k+1}}-B_{t_k}\right)\Big).
	\end{equation}

	We will work with an equidistant discretization
	$$ t_k = k \Delta t, \quad k=0, \ldots, N,$$
	with $\Delta t= T/ N$ and $N \in \mathbb{N}$.
	Moreover, we will use the standard notations for the spaces of differentiable functions. In particular, the subscript $pol$ denotes polynomial growth. See also Subsection \ref{aux_pde}.
	Finally, we set
	$$ \nu = \frac{2\kappa\theta}{\sigma^2},$$
	i.e. $\nu$ is the Feller index of the CIR process $V=(V_t)_{t \in [0,T]}$.
	
	Our analysis leads us to the following theorem:
	\begin{theorem}\label{maintheorem} (i) Let $f\in C^{6}_{pol}(\R \times [0,\infty))$ and $\nu>1$.
		Then both schemes satisfy
		\begin{align*}
			\limsup\limits_{N \rightarrow \infty }  \, \frac{ \left|\mathbb{E}\left[f(x_N,v_N)\right]-\mathbb{E}\left[f(X_T,V_T)\right]\right|}{ \left| T/N \right| } < \infty.
		\end{align*} (ii)   Let $f\in C^{6}_{pol}(\R \times [0,\infty))$ and $\nu \leq 1$. 	Then both schemes satisfy
		\begin{align*}
			\limsup\limits_{N \rightarrow \infty }  \, \frac{ \left|\mathbb{E}\left[f(x_N,v_N)\right]-\mathbb{E}\left[f(X_T,V_T)\right]\right|}{ \left| T/N \right|^{\alpha}}=0
		\end{align*}
		for all $\alpha\in (0, \nu)$.
	\end{theorem}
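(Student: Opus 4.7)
The plan is to follow the classical Talay--Tubaro weak-error strategy based on the Kolmogorov backward PDE, adapted to the boundary-degenerate log-Heston setting. First I would introduce the value function
\begin{equation*}
u(t, x, v) = \EE\bigl[f(X_T, V_T) \mid X_t = x, \, V_t = v\bigr], \qquad (t,x,v) \in [0,T]\times\R\times[0,\infty),
\end{equation*}
and verify that it satisfies $\partial_t u + \mathcal{L} u = 0$ with terminal data $f$, where $\mathcal{L}$ is the generator of the log-Heston SDE. The crucial first ingredient is a quantitative regularity result of the form $|\partial_x^i \partial_v^j u(t, x, v)| \le C_{ij}(1 + |x|^{p_{ij}} + v^{q_{ij}})(1 + v^{-\gamma_{ij}(\nu)})$ for $i + j \le 6$, with exponents $\gamma_{ij}(\nu)$ that decrease as $\nu$ grows. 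These would be obtained by differentiating the flow of the log-Heston SDE, exploiting the conditional Gaussianity of $X$ given the path of $V$, and reducing everything to existing estimates on the CIR semigroup near the origin.

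Next I would telescope
\begin{equation*}
\EE[f(x_N, v_N)] - \EE[f(X_T, V_T)] = \sum_{k=0}^{N-1}\EE\bigl[u(t_{k+1}, x_{k+1}, v_{k+1}) - u(t_k, x_k, v_k)\bigr]
\end{equation*}
and analyse one increment at a time. Taylor-expanding $u$ around $(t_k, x_k, v_k)$ to sixth order, substituting the Kolmogorov PDE for the $\partial_t u$ contribution, and matching low-order moments of the scheme's one-step increment with those of the exact diffusion, the low-order terms in the symbolic expansion cancel. The remainder splits into a genuine Itô--Taylor tail of size $(\Delta t)^2$ and a scheme-dependent \emph{fix term} that is non-zero only on the event $A_k^c = \{v_k + \kappa(\theta - v_k)\Delta t + \sigma\sqrt{v_k}\,\Delta W_k < 0\}$, where the $|\cdot|$ or $(\cdot)^+$ map differs from the identity.

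The third step is to collect moment bounds uniform in $N$: all positive moments of $v_k^{sym}$ and $v_k^{abs}$, Gaussian-type bounds $\PP(A_k^c \mid v_k) \lesssim \exp(-c\, v_k / (\sigma^2 \Delta t))$, and sharp small-ball estimates $\PP(v_k \le \delta) \lesssim \delta^{\nu/2}$ mirroring the CIR density near zero. Combining these with the regularity bounds yields a one-step weak error that, after the cut-off $\{v_k \le (\Delta t)^\eta\}$ versus $\{v_k > (\Delta t)^\eta\}$ optimised in $\eta$, decomposes into a regular contribution of order $(\Delta t)^2$ and a boundary contribution of order $(\Delta t)^{2 \wedge (1 + \nu/2) - \epsilon}$. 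Summing over $k$ produces weak order one whenever $\nu > 1$ and weak order $\nu - \epsilon$ when $\nu \le 1$, uniformly for both schemes because the only scheme-dependent object is the fix term, whose total displacement is controlled identically for $|\cdot|$ and $(\cdot)^+$ on $A_k^c$.

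The main obstacle is the sharp regularity theorem for $u$ with exponents $\gamma_{ij}(\nu)$ tight enough that the Feller threshold $\nu = 1$, rather than the Bossy--Diop condition $\nu > 2$, drives the dichotomy. Once this is available the rest is standard Itô--Taylor machinery, modulo the subtlety that the fixes are non-smooth; this is handled cleanly by expanding $u$ (which is smooth away from $v=0$) rather than the non-smooth scheme map.
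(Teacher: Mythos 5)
Your overall plan — Kolmogorov PDE, telescoping, one-step expansion — is the correct skeleton and matches the paper, but three ingredients are misjudged in ways that would derail the argument.

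First, the regularity of $u$. You posit singular bounds $|\partial_x^i\partial_v^j u|\lesssim (1+|x|^p+v^q)(1+v^{-\gamma_{ij}(\nu)})$ near $v=0$ and flag proving them as the main obstacle. The paper sidesteps this entirely by invoking Briani--Caramellino--Terenzi: for $f\in C^{6}_{pol}(\R\times[0,\infty))$ the solution $u$ lies in $C^3_{pol,T}(\R\times[0,\infty))$, i.e.\ derivatives up to order three have pure polynomial growth up to the boundary, with no $v^{-\gamma}$ blow-up. Your proposed route would require a sharper (and possibly false-as-stated) regularity theorem, whereas the real difficulty in this problem sits elsewhere.

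Second, the small-ball estimate for the discretized CIR component. You invoke $\PP(v_k\le\delta)\lesssim\delta^{\nu/2}$ as if it were available; the exponent is off (the CIR marginal density scales like $v^{\nu-1}$, giving $\delta^\nu$, not $\delta^{\nu/2}$), and — more importantly — proving any such bound for the \emph{discrete} scheme is precisely the technical core of the paper. What the paper actually controls is $\mathbb{E}[\exp(-c\,v_k/\Delta t)]$ via a backward recursion for a carefully chosen sequence $(a_j)$ (Lemma~\ref{technical_sequence} and Proposition~\ref{lemma3.6}), yielding $\PP(Z_t\le 0)\lesssim(\Delta t/\varepsilon)^{\nu(1-\varepsilon)}$ \emph{without} any restriction like $\nu>2$; this is what lets $\nu=1$ rather than $\nu=2$ become the dichotomy threshold. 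Your plan treats this as routine, but it is where the argument lives or dies, and your stated exponent also propagates an arithmetic error: a per-step boundary contribution of $(\Delta t)^{1+\nu/2}$ summed over $N\sim 1/\Delta t$ steps gives order $(\Delta t)^{\nu/2}$, not $(\Delta t)^{\nu}$.

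Third, the non-smoothness of $|\cdot|$ and $(\cdot)^+$ is not merely a ``fix term'' to be expanded away by using the smooth $u$ instead. Writing $\hat v^{sym}=|Z|$ and $\hat v^{abs}=Z^+$ via Tanaka--Meyer introduces a genuine local-time contribution $L^0(Z)$ in the Itô dynamics of the scheme, and when you apply Itô's formula to $u(t,\hat x_t,\hat v_t)$ this produces a pathwise Riemann--Stieltjes integral $\int u_v\,dL_t^0(Z)$ that has to be estimated on its own (the paper's Proposition~\ref{proplocaltime}, itself resting on the $\PP(Z_t\le 0)$ bound). It also produces a separate term supported on $\{Z_t\le 0\}$ from the sign/indicator modification of the drift and diffusion. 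Your decomposition into ``Itô--Taylor tail plus fix term on $A_k^c$'' does not surface these objects, and the cut-off argument you describe would not see them. Finally, one technical point worth noting: because $\hat v^{abs}$ can sit at $0$ on a set of positive measure, the paper first works with $u^\gamma(\cdot,\cdot,\cdot)=u(\cdot,\cdot,\cdot+\gamma)$ and sends $\gamma\searrow 0$; your sketch assumes Itô applies directly at the boundary, which is not justified for the absorbed scheme.
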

	
	Thus, for $\nu > 1$ we have weak convergence order one  and for $\nu \leq 1$ we have weak convergence order  $\nu-\epsilon$ for arbitrarily small $\epsilon >0$. In view of these convergence rates, exact and semi-exact simulation schemes are relevant alternatives for small Feller indices $\nu$.
	
	\smallskip

	\begin{rmk}
		Our analysis {{builds on the regularity of the Heston PDE established in \cite{BCT}. Moreover, we use the local time-approach for the analysis of the discretization of the CIR process from \cite{BO} and we modify and improve the analysis of the probability that the discretization of the CIR process becomes negative from \cite{BO,CS2}. In particular, we are able to remove the restriction $\nu >2$ for the Feller index in comparison to  \cite{BO,CS2}.}}
		
		As already mentioned, \cite{BO} studies the weak error of the symmetrized Euler scheme for the CIR process. In \cite{CS2} the strong error of a fully truncated Euler scheme for the CIR process is analyzed, while \cite{BCT} studies a hybrid-tree scheme for option pricing in stochastic volatility models with jumps.

	\end{rmk}
	
	\begin{rmk} The (positivity preserving) weak approximation of the CIR process has been also studied by Alfonsi in \cite{AA1,AA2}. In particular, weak first and second order schemes have been derived in these references. {{The weak approximation of the log-Heston model by a combined Euler scheme for $X_t$ and  a drift implicit Milstein scheme for $V_t$ has been studied in \cite{AltDiss,AltNeu}. In these works weak convergence order one is obtained for smooth test functions $f$ if $\nu >2$ and for bounded and measurable  test functions $f$ if  $\nu > \frac{9}{2} $, respectively.}}

		Another approach for the weak approximation of the CIR process and the Heston model, which combines the cubature on Wiener space approach and the classical Runge-Kutta method for ordinary differential equations, has been introduced in \cite{NN}; see also the related work \cite{NV} and Remark \ref{rem_bally}.
	\end{rmk}

	\begin{rmk} A decay of the weak convergence rate for $\nu <1$ has also been numerically
		observed  in \cite{BK} for an Euler scheme for the Heston Model and  for some of the schemes in \cite{AA1} for the CIR process.
		Interestingly, the convergence order $\nu$  also appears for the CIR process  in a different context, namely for the $L^1$-approximation at the terminal time point. If $\nu<1$, then $\nu$ is the best possible convergence order that  arbitrary approximations based on an equidistant discretization of the driving Brownian motion can achieve, see \cite{JentzenHefter}. Together with \cite{HEFTER} and \cite{MiNe} the latter reference also  provides  a comprehensive survey on the strong approximation of the CIR process.
	\end{rmk}
	
			\begin{rmk} The Heston model was introduced in the 1990s and is now a classical stochastic volatility model; our analysis addresses a problem that has been open for a long time. Meanwhile, the Heston model has been extended in several ways, e.g., by adding jumps \cite{Bates,BCT} or by considering rough volatility processes instead of the CIR process, which leads to the rough Bergomi model \cite{bergomi} or the rough Heston model \cite{rHeston}.
	\end{rmk}
	\bigskip

	\section{Numerical results}
	In this section, we will test numerically whether the weak convergence rates of Theorem \ref{maintheorem} are attained even under milder assumptions on the test function $f$. 
	We will consider a call, a put and a digital option. These payoffs are at most Lipschitz continuous which is typical in financial applications. This lack of smoothness is in contrast to the usual assumptions on $f$ for a weak error analysis. See also Remark \ref{rem_bally}.

	We use the following model parameters. 
	\begin{enumerate}
		\item[Model 1:] $S_0=100, V_0=0.04, K=100, \kappa = 5, \theta=0.04, \sigma=0.61, \rho=-0.7, T=1, r=0.0319$
		\item[Model 2:] $S_0=100, V_0=0.0457, K=100, \kappa = 5.07, \theta=0.0457, \sigma=0.48, \rho=-0.767, T=2, r=0.0$
		\item[Model 3:] $S_0=100, V_0=0.010201, K=100, \kappa = 6.21, \theta=0.019, \sigma=0.61, \rho=-0.7, T=1, r=0.0319$
		\item[Model 4:] $S_0=100, V_0=0.09, K=100, \kappa = 2, \theta=0.09, \sigma=1, \rho=-0.3, T=5, r=0.05$
	\end{enumerate}
	With these examples, we try to cover a wide range of Feller indices. We have $\nu\approx 1.075$ in Model 1, $\nu\approx2.01$ in Model 2, $\nu\approx0.63$ in Model 3 and $\nu\approx0.36$ in Model 4. For each model, we use the following discounted payoff functions:
	\begin{enumerate}
		\item European call: $g_1(S_T)= e^{-rT}\max\{S_T-K,0\}$
		\item European put: $g_2(S_T)= e^{-rT}\max\{K-S_T,0\}$
		\item Digital option: $g_3(S_T)= e^{-rT}\mathbbm{1}_{[0,K]}(S_T)$
	\end{enumerate}
	{Note that none of these payoffs satisfies the assumption of our Theorem. Thus, numerical convergence rates which coincide with the rates of our Theorem indicate that the latter might be valid under milder assumptions.}

	In order to measure the weak error rate, we simulated $M=2\cdot10^7$ independent copies  $g_i(s_N^{(j)})$, $j=1, \ldots, M$, of $g_i\left(s_N\right)$ with $s_N=\exp(x_N)$ to estimate $$ \mathbb{E}(g_i(s_N))$$ by $$p_{M,N}=\frac{1}{M} \sum_{j=1}^M g_i(s_N^{(j)})$$  for each combination of model parameters, payoff and number of steps $N\in\{2^3,...,2^7\}$ or $N\in\{2^3,...,2^8\}$ where $\Delta t=\frac{T}{N}$. To obtain a stable estimate of the convergence rates, we started with a $\Delta t$ which is roughly around $\frac{1}{\kappa}$ (which is required also for some auxiliary results of the proof of our main result). The Monte Carlo mean of these
	samples is then compared to a reference solution $p_{\sf ref}$, i.e.,
			$$ e(N)=| p_{\sf ref}-p_{M,N}|, $$
			and we measure the weak error order by the slope of a least-squares fit of the data $(N,\log_2(e(N))$. The reference solutions can be computed with sufficiently high accuracy from semi-explicit formulae via Fourier methods. In particular, the put price can be calculated from the call pricing formula given in \cite{heston} via the put-call-parity. The price of the digital option can be computed from the probability $P_2$ given in \cite{heston}; it equals $e^{-rT}\left(1-P_2\right)$. 
	
	In Table \ref{table:1} we can see the measured convergence rates for Model 1. In Figure \ref{fig1a}, the error plot for the call option is displayed\footnote{The plots for the different options behave similarly, i.e. regularly and in accordance with Theorem  \ref{maintheorem}, so we show only one plot for each option.}.
	\begin{table}[htb!]
		\centering {
				\begin{tabular}{c c c c} 
					\hline
					Method & Call & Put & Digital\\ [0.5ex] 
					\hline
					SE & 1.02 & 1.00 & 0.94\\ 
					AE & 1.02 & 0.94 & 0.91\\
					\hline
				\end{tabular}
				\caption{Measured convergence rates Model 1}
				\label{table:1} }
	\end{table}
	Because of our results in Theorem \ref{maintheorem}, we would expect SE  and AE to have a weak convergence rate of $1$ and this is indeed the case in this example.
	\begin{figure}[htb!]
		\centering
		\includegraphics[scale=.15]{./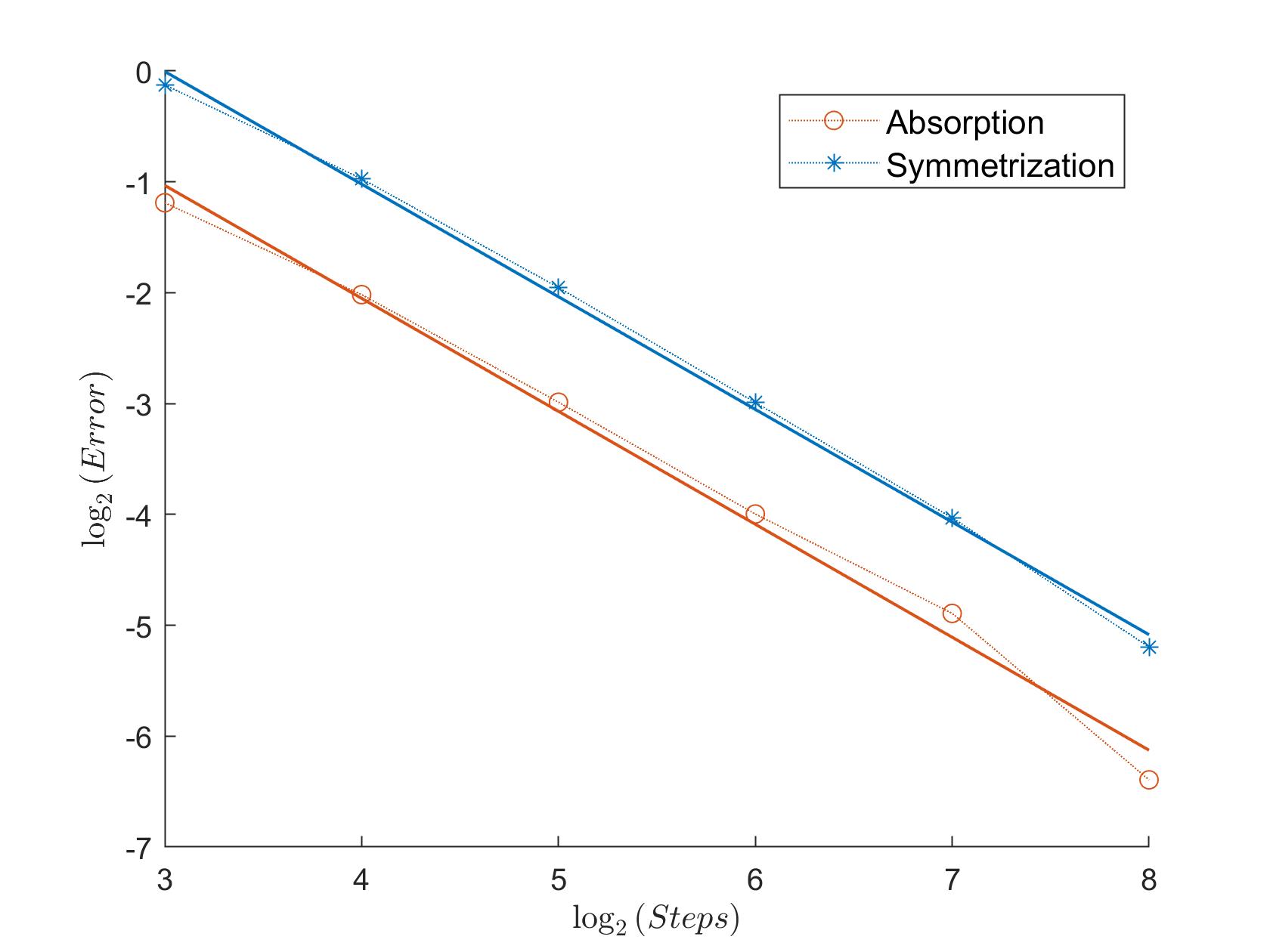}
		\caption{Call Model 1}
		\label{fig1a}
	\end{figure}
	Looking at the plot, we can see that the convergence behavior is very regular. 
	
	For the next model, we would again expect a convergence rate around $1$.  The results from Table \ref{table:2} indicate that for particular payoffs even a higher numerical rate is obtained if the Feller index is larger than $1$. The rates of SE and AE are around $1.5$. The error plot of the put option in Figure \ref{fig2a} shows again a regular convergence behavior.
	\begin{table}[htb!]
		\centering {
				\begin{tabular}{c c c c} 
					\hline
					Method & Call & Put & Digital\\ [0.5ex] 
					\hline
					SE & 1.51 & 1.50 & 1.34\\ 
					AE & 1.37 & 1.50 & 1.42\\
					\hline
				\end{tabular}
				\caption{Measured convergence rates Model 2}
				\label{table:2} }
	\end{table} 
	\begin{figure}[htb!]
		\centering
		\includegraphics[scale=.15]{./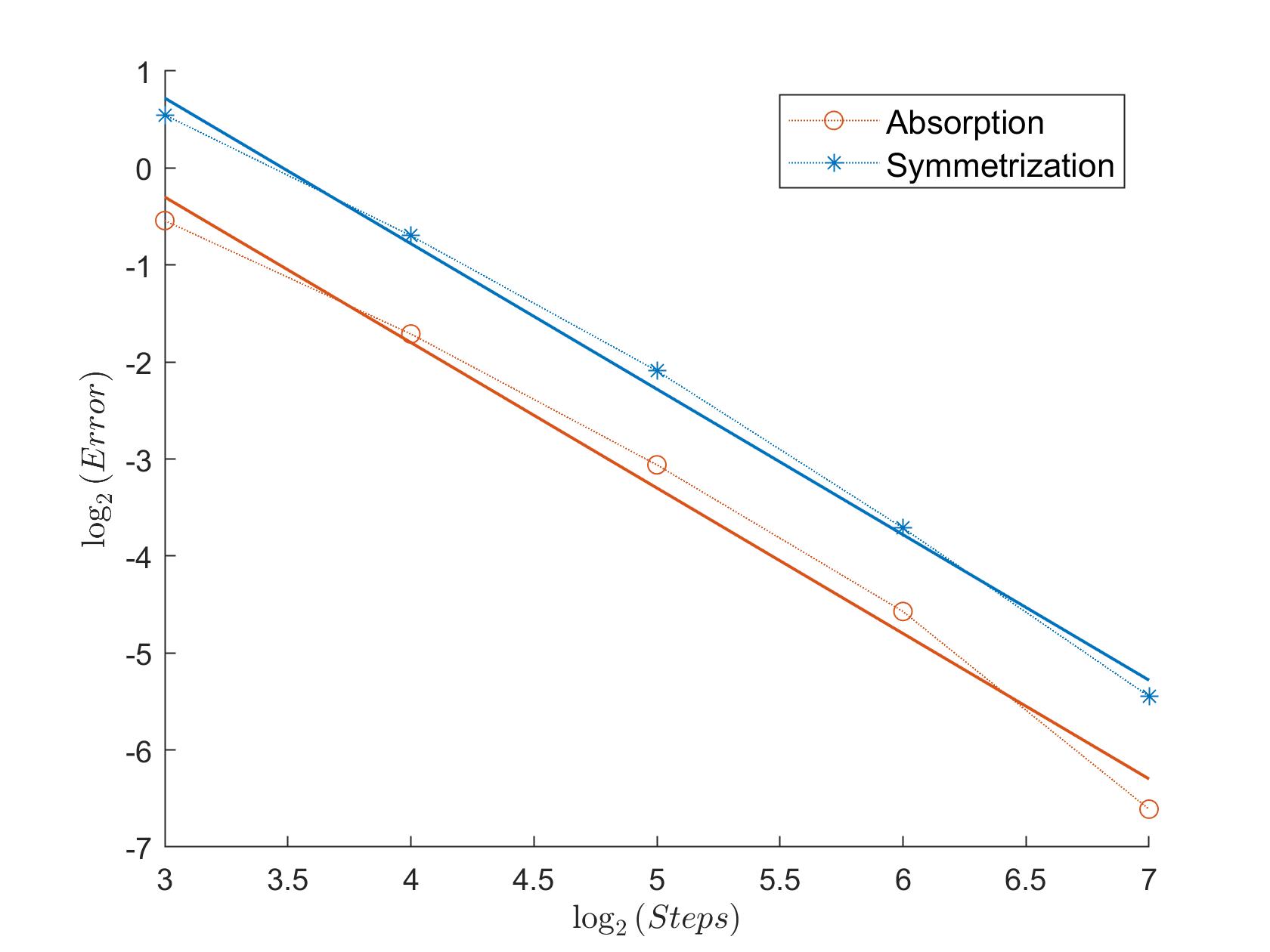}
		\caption{Put Model 2}
		\label{fig2a}
	\end{figure}
	Table \ref{table:3} shows the estimated convergence rates for Model 3. This model has a Feller index around $0.63$. The simulation results indicate that this is also the convergence rate for SE and AE. 
	\begin{table}[htb!]
		\centering {
				\begin{tabular}{c c c c} 
					\hline
					Method & Call & Put & Digital\\ [0.5ex] 
					\hline
					SE & 0.60 & 0.60 & 0.55 \\ 
					AE & 0.57 & 0.57 & 0.55\\
					\hline
				\end{tabular}
				\caption{Measured convergence rates Model 3}
				\label{table:3} }
	\end{table} 
	The error plot of the digital option in Figure \ref{fig3a} again shows no irregularities.
	\begin{figure}[htb!]
		\centering
		\includegraphics[scale=.15]{./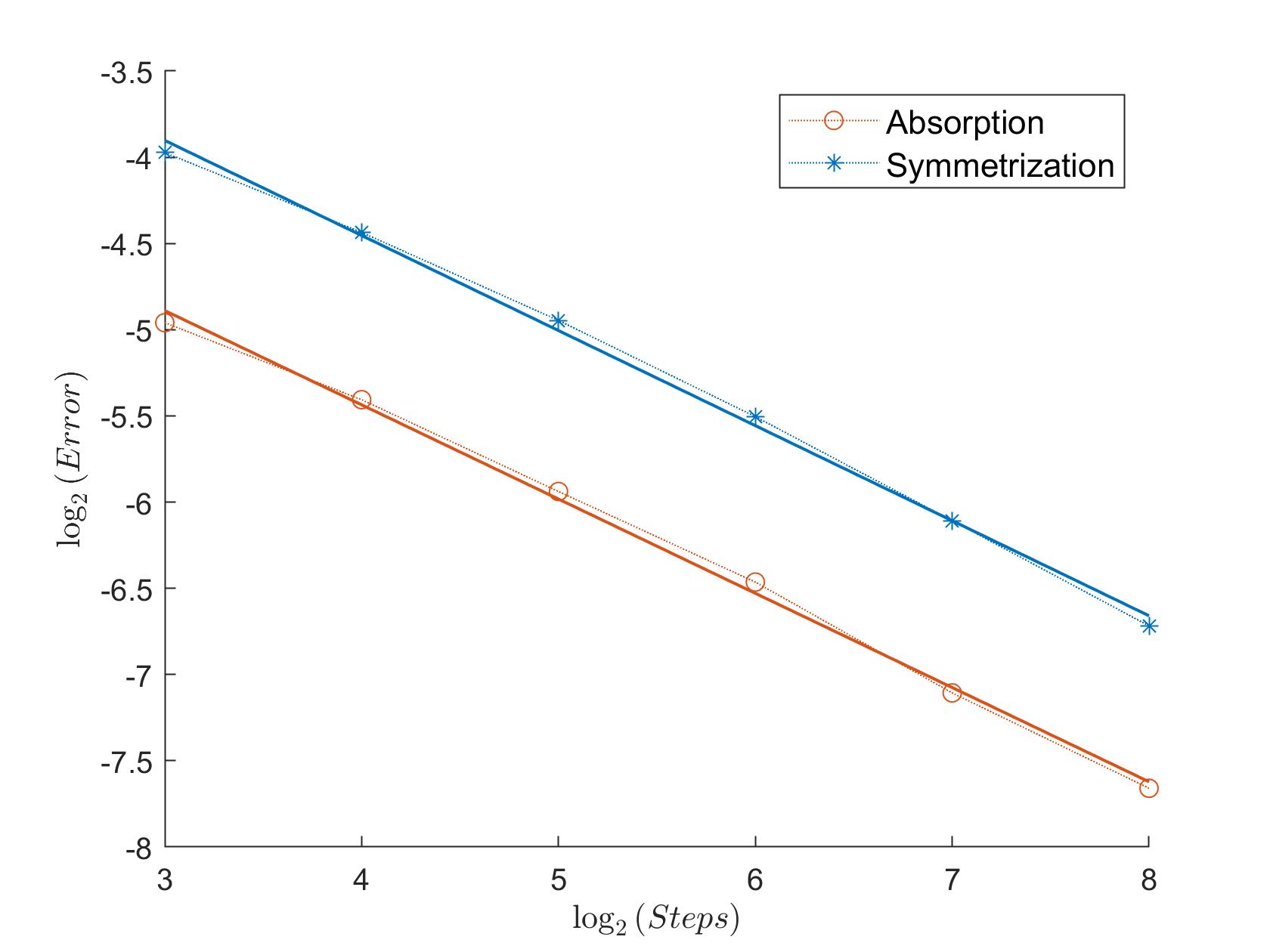}
		\caption{Digital Model 3}
		\label{fig3a}
	\end{figure}
	
	Model 4 has the lowest Feller index which is around $0.36$. Again, Table \ref{table:4} confirms this number as the numerical convergence rate for SE and AE and the plot in Figure \ref{fig4a}  shows a regular behavior.
	\begin{table}[htb!]
		\centering {	
				\begin{tabular}{c c c c} 
					\hline
					Method & Call & Put & Digital\\ [0.5ex] 
					\hline
					SE & 0.47 & 0.47 & 0.40\\ 
					AE & 0.39 & 0.39 & 0.35\\
					\hline
				\end{tabular}
				\caption{Measured convergence rates Model 4}
				\label{table:4} }
	\end{table} 
	
	\begin{figure}[htb!]
		\centering
		\includegraphics[scale=.15]{./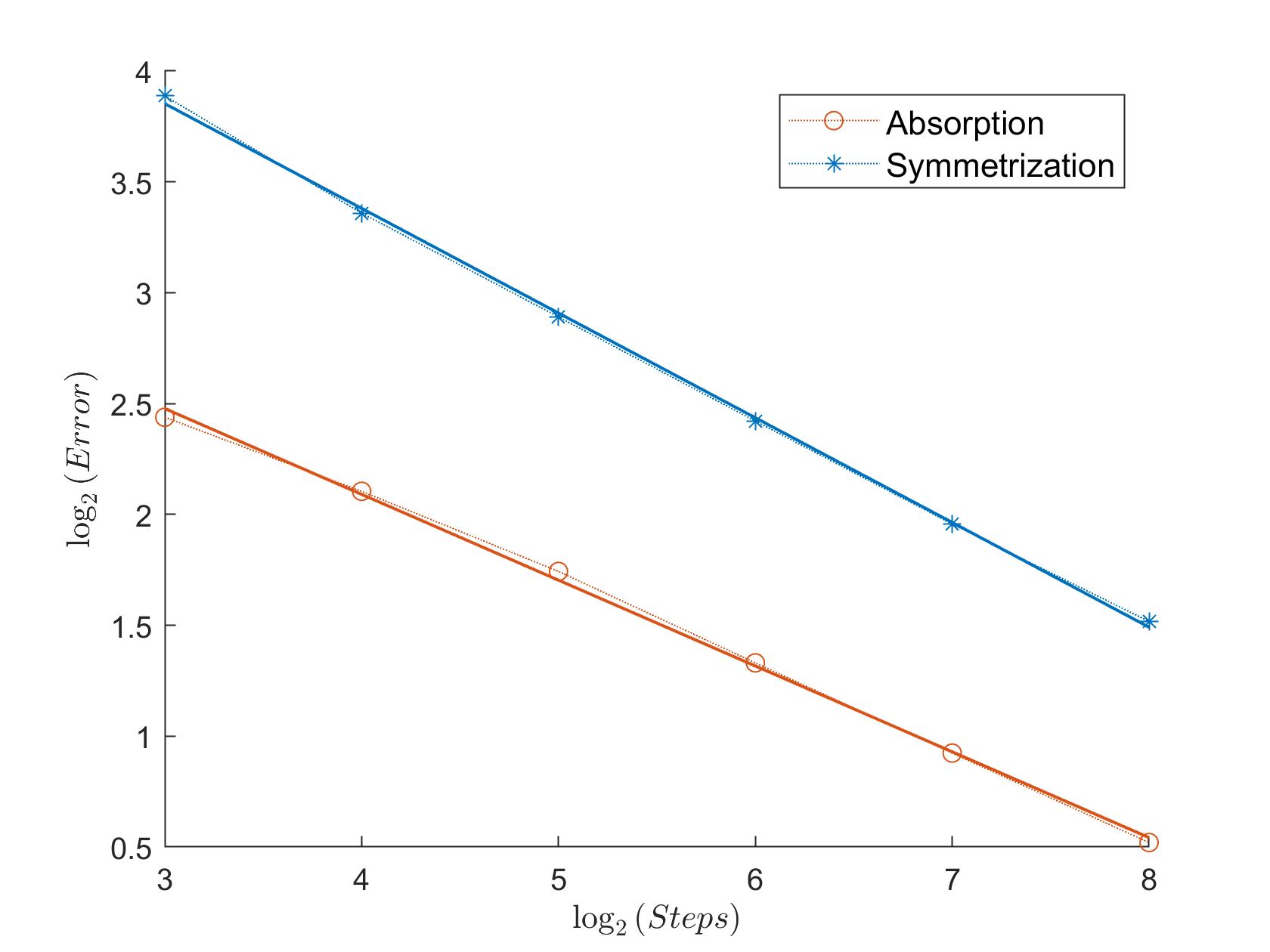}
		\caption{Call Model 4}
		\label{fig4a}
	\end{figure}
	
	Summarizing, we can confirm a  (minimum) numerical convergence rate of $min\{\nu,1\}$ for the Symmetrized and Absorbed Euler under even milder assumptions on the regularity of the payoff function. We saw even better numerical convergence results for a high Feller index. 
	\smallskip
	
	\begin{rmk} {Our analysis does not carry over to modified Euler schemes for the CIR process that take negative values, as e.g. the full-truncation Euler \cite{CS2}. 
			In these schemes  the approximation of  the  CIR  component is not bounded from below which prohibits our application of the Kolmogorov PDE and It\=o's lemma. See \cite{lord-comparison} for a survey on modified Euler schemes.}
	\end{rmk}

	\begin{rmk}\label{rem_bally}
		Bally and Talay analyze in \cite{Talay} the weak error of the Euler scheme for SDEs with $C_b^{\infty}$-coefficients, i.e. coefficients which are infinitely differentiable and whose derivatives of any order are bounded, that satisfy an additional  non-degeneracy condition of H\"ormander type (UH).  They establish  weak order one for the Euler scheme for test functions $f$ that are only measurable and bounded. However, the log-Heston model does not satisfy the above assumptions and an adaptation of the approach of \cite{Talay} to the log-Heston model leads to  {{the restrictive assumption $\nu > \frac{9}{2}$}} in \cite{AltDiss}.
		
		The cubature on Wiener space methodology, see \cite{LV}, is another approach  to deal with test functions $f$ of low smoothness. This approach transfers the quadrature of the SDE into the quadrature of a related collection of ordinary differential equations. It allows to handle  Lipschitz continuous test functions for SDEs with $C_b^{\infty}$-coefficients, which satisfy the (UHG) condition, see \cite{Kusoka}. The latter is a relaxation of the (UH) condition.
	\end{rmk}

	\section{Auxiliary Results}
	In this section, we will collect and establish several auxiliary results for the weak error analysis.

	\subsection{Kolmogorov PDE}\label{aux_pde}
	To obtain the Kolmogorov backward PDE for the Heston Model, we look at the following integral equations:
	\begin{align*}
		V^{s,v}_t&=v+\int_{s}^{t}\kappa(\theta-V^{s,v}_r)dr+\sigma\int_{s}^{t}\sqrt{V^{s,v}_r}dW_r,\\
		{X}^{s,x,v}_t &= x + \mu (t-s) -\frac{1}{2}\int_{s}^{t}V^{s,v}_r dr+\rho\int_{s}^{t}\sqrt{V^{s,v}_r}dW_r+\sqrt{1-\rho^2}\int_{s}^{t}\sqrt{V^{s,v}_r}dB_r.
	\end{align*}
	Without loss of generality, we will set $\mu=0$ throughout the remainder of this manuscript. We now define
	\begin{align*}
		u(t,x,v) &= \mathbb{E}\left[f\left({X}^{t,x,v}_T,V^{t,v}_T\right)\right], \, t\in[0,T], \, x\in \mathbb{R}, \, v\ge 0,
	\end{align*} 
	and let  $f\colon \mathbb{R} \times [0, \infty) \rightarrow \mathbb{R}$. The corresponding  Kolmogorov backward PDE is then given by
	\begin{equation}\label{kolmoPDE}
		\begin{aligned}
			u_t(t,x,v) =&\text{ }  \frac{v}{2} u_x(t,x,v)-\kappa(\theta-v)u_v(t,x,v)\\
			&-\frac{v}{2}\left(u_{xx}(t,x,v)+2\rho\sigma u_{xv}(t,x,v)+\sigma^2u_{vv}(t,x,v)\right)\text{, } \quad t\in(0,T),x\in\R,v>0,\\
			u(T,x,v) =&\text{ } f(x,v) \text{, } \quad x\in\R, v\ge0.
		\end{aligned}
	\end{equation}
	
	\smallskip
	
	In our error analysis we will follow the now classical approach of \cite{TT} which exploits the regularity of \eqref{kolmoPDE}. For the latter we will rely on the recent work of Briani et al. \cite{BCT}. To state their regularity results, we will need the following notation:
	
	\bigskip

	For a multi-index $l=(l_1,...,l_d)\in \mathbb{N}^d$, we define $s(l)=\sum_{j=1}^{d}l_j$ and for $y\in\R^d$, we define $\partial^l_y=\partial^{l_1}_{y_1}\cdot\cdot\cdot\partial^{l_d}_{y_d}$. Moreover, we denote by $|y|$ the standard Euclidean norm in $\R^d$. Let $\cD\subset\R^d$ be a domain or a closure of a domain and $q\in\mathbb{N}$. $C^q\left(\cD\right)$ is the set of all real-valued functions on $\cD$ which are $q$-times continuously differentiable. $C^q_{pol}\left(\cD\right)$ is the set of functions $g\in C^q\left(\cD\right)$ such that there exist $c,a>0$ for which
	\begin{align*}
		|\partial^l_yg(y)|\le c(1+|y|^a), \qquad y\in\cD,\, s(l)\le q.
	\end{align*}
	We define $C^q_{pol,T}\left(\cD\right)$ as the set of functions $v\in C^{\lfloor q/2\rfloor,q}_{pol}\left([0,T)\times\cD\right)$ such that there exist $c,a>0$ for which
	\begin{align*}
		\sup\limits_{t<T}|\partial^k_t\partial^l_yv(t,y)|\le c(1+|y|^a), \qquad y\in\cD, \, 2k+s(l)\le q.
	\end{align*}
	\begin{proposition}[Briani, Caramellino, Terenzi; Proposition 5.3 and Remark 5.4]\label{propbriani}
		Let $q\in\mathbb{N}$ and suppose that $f\in C^{2q}_{pol}(\R \times [0,\infty))$. Then, the solution $u$ of PDE \eqref{kolmoPDE} satisfies $u\in C^q_{pol,T}(\R \times [0,\infty))$. 
	\end{proposition}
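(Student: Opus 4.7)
The plan is to establish spatial regularity of $u$ via differentiation under the expectation using the stochastic flow of \eqref{hes-log}, and then to convert this into the required mixed time--space regularity by repeated use of the Kolmogorov PDE \eqref{kolmoPDE}. The factor of two between the smoothness of $f$ (order $2q$) and the smoothness of $u$ (order $q$ in the $C^{\lfloor q/2\rfloor,q}_{pol}$ sense) is dictated by the structure of \eqref{kolmoPDE}: each time derivative of $u$ costs two spatial derivatives.

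First I would analyse the maps $v\mapsto V^{t,v}_T$ and $(x,v)\mapsto X^{t,x,v}_T$. Since $x\mapsto X^{t,x,v}_T$ is an affine shift, only the $v$-derivatives are nontrivial. The first variation $J_r=\partial_v V^{t,v}_r$ formally solves the linear SDE $dJ_r=-\kappa J_r\,dr+\tfrac{\sigma}{2\sqrt{V^{t,v}_r}}\,J_r\,dW_r$ with $J_t=1$, hence
\begin{align*}
J_r=\exp\Bigl(-\kappa(r-t)-\tfrac{\sigma^2}{8}\int_t^r(V^{t,v}_s)^{-1}\,ds+\tfrac{\sigma}{2}\int_t^r(V^{t,v}_s)^{-1/2}\,dW_s\Bigr).
\end{align*}
Higher-order variations $\partial_v^m V^{t,v}$ and the $(x,v)$-variations of $X^{t,x,v}$ satisfy linear SDEs whose coefficients are polynomial expressions in $J$ and in $(V^{t,v})^{-1/2}$. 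Differentiating under the expectation then yields representations of $\partial_x^a\partial_v^b u(t,x,v)$ as expectations of products of derivatives of $f$ evaluated at $(X^{t,x,v}_T,V^{t,v}_T)$ with appropriate polynomials in these variations.

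The second step is moment control. I would combine (i) the standard positive-moment bounds $\sup_{r\in[t,T]}\E[(V^{t,v}_r)^p]\le C(1+v^p)$, (ii) the polynomial-growth hypothesis on $f$ and its derivatives, and (iii) exponential-martingale / Girsanov estimates for $J_r$. Via H\"older's inequality and, where needed, an integration by parts in the Malliavin sense that shifts derivatives off the singular weights onto $f$, these ingredients would give polynomial bounds in $(x,v)$ for every spatial derivative of $u$ up to order $2q$. Time derivatives are then obtained recursively from \eqref{kolmoPDE}: if the spatial derivatives of $u$ up to order $2k$ grow polynomially, then so does $\partial_t^k u$, and iterating gives polynomial growth of $\partial_t^k\partial_{(x,v)}^l u$ whenever $2k+s(l)\le q$, which is precisely $u\in C^q_{pol,T}$.

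The hard part will be the degeneracy of the CIR diffusion at $v=0$: each spatial $v$-derivative introduces a factor $(V^{t,v}_r)^{-1/2}$ into the corresponding variation, so naive $L^p$-bounds on $\partial_v^m u$ require ever more negative moments of $V^{t,v}$, which in turn would force a restrictive lower bound on the Feller index $\nu$. To obtain the conclusion for every $\nu>0$ one must exploit a compensation mechanism: either a Malliavin integration by parts that systematically transfers derivatives off the singular weights onto $f$, or explicit computations with the non-central $\chi^2$ transition density of the CIR process, so that after cancellations the polynomial estimates hold uniformly for $v\ge 0$ and for any $\nu>0$.
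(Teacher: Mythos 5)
This statement is cited from Briani, Caramellino, Terenzi \cite{BCT} (their Proposition 5.3 and Remark 5.4); the paper does not supply its own proof, so your sketch can only be measured against what is known about that external argument and against its own internal soundness.

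On its own terms your plan has a genuine gap at the step you yourself flag. The first-variation process $J_r=\partial_v V^{t,v}_r$ satisfies
\begin{align*}
J_r=\exp\Bigl(-\kappa(r-t)-\tfrac{\sigma^2}{8}\int_t^r(V^{t,v}_s)^{-1}\,ds+\tfrac{\sigma}{2}\int_t^r(V^{t,v}_s)^{-1/2}\,dW_s\Bigr),
\end{align*}
and controlling $\E[J_r^p]$, let alone products of higher variations, requires negative moments of $V^{t,v}$; for the CIR process $\E[(V^{t,v}_s)^{-p}]<\infty$ only for $p<\nu$, so this route imposes a lower bound on the Feller index that grows with $q$, contradicting the assertion for all $\nu>0$. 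Of the two ``compensation mechanisms'' you propose, the Malliavin integration-by-parts one is especially doubtful: the CIR diffusion degenerates at $v=0$, so the Malliavin covariance matrix of $(X_T,V_T)$ is not uniformly nondegenerate near the boundary, and the standard IBP machinery that trades derivatives for weights does not apply without exactly the kind of moment restrictions you are trying to avoid. The second alternative (working from the explicit CIR transition density, or more to the point from the affine structure of the Heston characteristic function $\E[e^{iu_1 X_T+iu_2 V_T}\mid X_t=x,\,V_t=v]=\exp(\phi(T-t,u)+iu_1x+\psi(T-t,u)v)$, where $\phi,\psi$ solve Riccati ODEs) is indeed the mechanism that lets one differentiate in $v$ without ever touching negative moments, and is much closer to what \cite{BCT} actually exploits; but you neither carry out that computation nor address the nontrivial polynomial-growth estimates that the Fourier representation then demands. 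As written, the proposal identifies the right obstacle and names plausible escape routes, but does not close the gap, and your primary route would demonstrably fail for small $\nu$.
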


	\smallskip
	
	Constants whose values depend only on  $T$, $x_0$, $v_0$, $\kappa$, $\theta$, $\sigma$ and $\rho$ will be denoted in the following by $C$, regardless of their value. Other dependencies will be denoted by subscripts, i.e. $C_{f,\beta}$ means that this constant depends additionally on  the function $f$ and the parameter $\beta$.
	Moreover, the value of all these constants can change from line to line.

	\smallskip
	
	\subsection{Properties of the discretization schemes}\label{propscheme}
	We write both volatility schemes as It\=o processes for $t\in[t_k,t_{k+1}]$:
	\begin{align*}
		\hat{v}^{sym}_{t}&=\biggl|v_{n(t)}^{sym}+\int_{\eta(t)}^{t}\kappa\left(\theta-v_{n(t)}^{sym}\right)ds+\sigma\int_{\eta(t)}^{t}\sqrt{v_{n(t)}^{sym}}dW_s\biggr|,\\
		\hat{v}^{abs}_{t}&=\left(v_{n(t)}^{abs}+\int_{\eta(t)}^{t}\kappa\left(\theta-v_{n(t)}^{abs}\right)ds+\sigma\int_{\eta(t)}^{t}\sqrt{v_{n(t)}^{abs}}dW_s\right)^{+}.
	\end{align*}
	Here we have set $ n(t):=\max\{k\in \{0,...,N\}:t_k\le t \}$ and $\eta(t):=t_{n(t)}$.
	The log-price discretization is the same for both schemes:
	\begin{align*}
		\hat{x}_t^{\star}&=x_{n(t)}^{\star}-\frac{1}{2}\int_{\eta(t)}^{t}v_{n(t)}^{\star}ds+\rho\int_{\eta(t)}^{t}\sqrt{v_{n(t)}^{\star}}dW_s+\sqrt{1-\rho^2}\int_{\eta(t)}^{t}\sqrt{v_{n(t)}^{\star}}dB_s,
	\end{align*}
	where $\star \in \{sym, abs\}$.
	We define
	\begin{align}\label{defZ}
		Z_t^{\star}:=v_{n(t)}^{\star}+\kappa\big(\theta-v_{n(t)}^{\star}\big)(t-\eta(t))+\sigma\sqrt{v_{n(t)}^{\star}}\left(W_{t}-W_{\eta(t)}\right)
	\end{align}
	and use the Tanaka-Meyer formulae  for $\hat{v}^{sym}_t=\bigl|Z_t^{sym}\bigr|$ and  for $\hat{v}^{abs}_t=\left(Z_t^{abs}\right)^+$ to obtain
	\begin{equation}\label{itosym}
		\begin{aligned}
			\hat{v}^{sym}_t=v_{n(t)}^{sym} & +\int_{\eta(t)}^{t} \sign\left(Z_s^{sym}\right)\kappa\left(\theta-v_{n(s)}^{sym}\right)ds+\sigma\int_{\eta(t)}^{t}\sign\left(Z_s^{sym}\right)\sqrt{v_{n(s)}^{sym}}dW_s  \\ & +\left( L_t^0(Z^{sym})-L_{\eta(t)}^0(Z^{sym})\right)
		\end{aligned}
	\end{equation}
	and
	\begin{equation}
		\begin{aligned}
			\hat{v}^{abs}_t=\text{ }v_{n(t)}^{abs} &+\int_{\eta(t)}^{t}\mathbbm{1}_{\{Z_s^{abs} > 0\}}\kappa\left(\theta-v_{n(s)}^{abs}\right)ds+\sigma\int_{\eta(t)}^{t}\mathbbm{1}_{\{Z_s^{abs}> 0\}}\sqrt{v_{n(s)}^{abs}}dW_s \\ & +\frac{1}{2}\left( L_t^0(Z^{abs})-L_{\eta(t)}^0(Z^{abs})\right). 	
		\end{aligned}
	\end{equation}
	Here $L^0(Z^{\star})=(L_t^0(Z^{\star}))_{t\in [0,T]}$ is the local time  of $Z^{\star}$ {{in zero.}}
	For almost all $\omega \in \Omega$ the map $[0,T] \ni t\mapsto [L_t^0(Z^{\star})](\omega) \in \mathbb{R}$ is continuous and non-decreasing with $L_0^0(Z^{\star})=0$. See e.g.~Theorem 7.1 in Chapter III of  \cite{KS}.
	
	Moreover, note that $$ {P}(Z_t^{sym}=0 | v_{n(t)}^{sym} =v )= {P}(Z_t^{abs}=0 | v_{n(t)}^{abs}=v )=0, \,\, \qquad v \geq 0,  \, t \in [0,T] \setminus \{t_0,t_1, \ldots, t_N\},$$ and so $P(Z_t^{\star}=0)=0$ for all $ t \in [0,T]   \setminus \{t_0,t_1, \ldots, t_N\}$ and  $\star \in \{sym, abs\}$. 
	\smallskip
	
	From \cite{BO} we know that the moments of the symmetrized Euler scheme are bounded. This also holds for the Euler scheme with absorption.
	\begin{lemma}\label{vbounded}
		For any $p\ge1$ we have that
		\begin{align*}
			\mathbb{E}\left[\sup\limits_{t\in[0,T]}\big| \hat{v}^{\star}_{t}\big|^{p}\right] < \infty
		\end{align*}
		for $\star \in \{sym, abs\}$.
	\end{lemma}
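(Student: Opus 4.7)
My plan has two main ingredients: a uniform grid-moment estimate for $v^\star_k$ and a Skorohod-reflection control of the continuous-time supremum. Both schemes satisfy the pointwise bound $\hat{v}^\star_t\le|Z^\star_t|$ (with equality for $\star=sym$), so it suffices to bound $\E[\sup_{t\in[0,T]}|Z^\star_t|^p]$ for every $p\ge1$. For the symmetrized scheme this is essentially the content of \cite{BO}; the absorbed scheme follows the identical strategy, with $\sign(Z^{sym})$ replaced by $\mathbbm{1}_{\{Z^{abs}>0\}}$ and the coefficient of the local time changed from $1$ to $\tfrac12$, so I describe both cases in parallel.

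Conditional on $\cF_{t_k}$ the variable $Z^\star_{t_{k+1}}$ is Gaussian with mean $v^\star_k(1-\kappa\Delta t)+\kappa\theta\Delta t$ and variance $\sigma^2 v^\star_k\Delta t$. For $\Delta t\le 1/\kappa$, the standard Gaussian bound $\E[|X|^p]\le C_p(|\mu|^p+\sigma^p)$ (for $X\sim N(\mu,\sigma^2)$) combined with the elementary estimates $(v+a)^p\le v^p+C_pa(1+v^{p-1})$ and $v^{p/2}\Delta t^{p/2}\le v^p\Delta t+\Delta t$ (valid for $p\ge2$, $\Delta t\le1$) yield the one-step inequality
\begin{equation*}
\E\bigl[(v^\star_{k+1})^p\mid\cF_{t_k}\bigr]\le\E\bigl[|Z^\star_{t_{k+1}}|^p\mid\cF_{t_k}\bigr]\le(1+c_p\Delta t)(v^\star_k)^p+c_p\Delta t.
\end{equation*}
Taking expectations and applying discrete Gronwall give the uniform grid-moment bound $\sup_N\sup_{0\le k\le N}\E[(v^\star_k)^p]\le C_{p,T}$ for $p\ge2$; the case $1\le p<2$ follows by Jensen. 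For $\Delta t>1/\kappa$ there are only finitely many steps and the result is trivial.

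Summing \eqref{itosym} and its absorbed counterpart over all intervals yields the global decomposition $\hat{v}^\star_t=v_0+A^\star_t+M^\star_t+\lambda^\star L^0_t(Z^\star)$ on $[0,T]$, with $\lambda^{sym}=1$, $\lambda^{abs}=\tfrac12$, and $A^\star,M^\star$ the drift and stochastic integrals from \eqref{itosym}. Because $\hat{v}^\star\ge0$ and $L^0(Z^\star)$ is continuous, non-decreasing, vanishes at $0$, and increases only on $\{s:Z^\star_s=0\}=\{s:\hat{v}^\star_s=0\}$, the pair $(\hat{v}^\star,\lambda^\star L^0(Z^\star))$ is the Skorohod reflection at $0$ of the continuous process $Y_t:=v_0+A^\star_t+M^\star_t$. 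Skorohod's lemma then gives $\lambda^\star L^0_T(Z^\star)\le\sup_{s\le T}|Y_s|$ and hence $\sup_{t\le T}\hat{v}^\star_t\le 2\bigl(v_0+\sup_t|A^\star_t|+\sup_t|M^\star_t|\bigr)$. The drift supremum is bounded pathwise by $\kappa\theta T+\kappa\int_0^T v^\star_{n(s)}\,ds$, whose $p$-th moment is finite by Jensen and the grid-moment bound; the martingale supremum is controlled by Burkholder-Davis-Gundy as $\E[\sup_t|M^\star_t|^p]\le C_p\sigma^p\E[(\int_0^T v^\star_{n(s)}\,ds)^{p/2}]$, again finite by Jensen and the grid-moment bound.

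The essential obstacle is the local-time term $\lambda^\star L^0(Z^\star)$: it has no obvious direct pathwise upper bound, and its control requires the Skorohod-reflection identification of $(\hat{v}^\star,\lambda^\star L^0(Z^\star))$ as the reflection of $v_0+A^\star+M^\star$ at $0$. Once that structural point is noticed, the remainder of the argument collapses to a discrete Gronwall estimate and classical $L^p$ bounds for a drift integral and an It\^o martingale.
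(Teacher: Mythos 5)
The Skorohod-reflection idea is the right structural observation and makes for a self-contained proof: you correctly identify $(\hat{v}^\star,\lambda^\star L^0(Z^\star))$ as a solution of the Skorohod problem for $Y=v_0+A^\star+M^\star$ (for the absorbed scheme the set $\{Z^{abs}=0\}$ on which the local time charges is merely \emph{contained} in $\{\hat{v}^{abs}=0\}$, not equal to it, but that inclusion is exactly what the Skorohod conditions require), deduce $\lambda^\star L^0_T(Z^\star)=\sup_{s\le T}(-Y_s)^+\le\sup_{s\le T}|Y_s|$, and then close with BDG, Jensen, and the grid-moment bound. The paper itself gives no proof and simply cites \cite{BO} (and treats the absorbed case as analogous), so a direct comparison is not possible, but your argument is a valid way to fill in the statement and is worth spelling out.

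One place where the write-up is too loose to carry the discrete Gronwall step: the Gaussian bound $\E[|X|^p]\le C_p(|\mu|^p+\sigma^p)$ with an unspecified constant $C_p>1$ in front of $|\mu|^p$ is not good enough. Plugging it in gives $\E[(v^\star_{k+1})^p\mid\cF_{t_k}]\le C_p(1+c_p\Delta t)(v^\star_k)^p+\dots$, and iterating that over $N=T/\Delta t$ steps produces a factor of order $C_p^N$, which diverges as $N\to\infty$; the resulting grid-moment bound would then not be uniform in $N$, and the whole second half of the argument collapses. What you need is the sharper one-step estimate with leading coefficient exactly one, i.e.\ for $\mu\ge0$ and $X\sim N(\mu,\tilde\sigma^2)$ that $\E[|X|^p]\le\mu^p+C_p\,\tilde\sigma^2\bigl(\mu^{p-2}+\tilde\sigma^{p-2}\bigr)$ (say for $p\ge2$), which for even integer $p$ follows from the binomial expansion of $\E[(\mu+\tilde\sigma G)^p]$ (the cross terms with odd powers of $G$ vanish and every remaining correction carries at least $\tilde\sigma^2=\sigma^2 v^\star_k\Delta t$), and for general $p$ by Jensen. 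With that sharpening, together with $(v+a)^p\le v^p+C_pa(1+v^{p-1})$ and Young's inequality, the claimed one-step inequality $\E[(v^\star_{k+1})^p\mid\cF_{t_k}]\le(1+c_p\Delta t)(v^\star_k)^p+c_p\Delta t$ does hold and the rest of your argument goes through as written.
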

	
	\begin{proof}
		
		The proof for the symmetrized Euler scheme can be found in \cite{BO}, the proof for the absorbed Euler scheme can be done analogously.
	\end{proof}
	\smallskip
	Furthermore, the discretization of the log-price process also has  bounded moments.
	\begin{lemma}\label{xbounded}
		For any $p\ge1$ we have that
		\begin{align*}
			\mathbb{E}\left[\sup\limits_{t\in[0,T]} \big| \hat{x}_{t}^{\star} \big|^{p}\right] < \infty 
		\end{align*}
		for $\star \in \{sym, abs\}$.
	\end{lemma}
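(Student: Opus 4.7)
The plan is to rewrite $\hat{x}_t^{\star}$ as a single It\=o process on $[0,T]$ by telescoping the piecewise definition: for any $t \in [0,T]$,
\begin{align*}
\hat{x}_t^{\star} = x_0 - \frac{1}{2}\int_0^t v_{n(s)}^{\star}\, ds + \rho \int_0^t \sqrt{v_{n(s)}^{\star}}\, dW_s + \sqrt{1-\rho^2}\int_0^t \sqrt{v_{n(s)}^{\star}}\, dB_s,
\end{align*}
which follows by summing the increments over $t_0,\dots,t_{n(t)}$ and appending the residual interval.

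Next, I would use the elementary inequality $|a_1+\cdots+a_4|^p \le 4^{p-1}(|a_1|^p+\cdots+|a_4|^p)$ and take $\sup_{t\in[0,T]}$ inside each term. Since $v_{n(s)}^{\star}$ coincides with $\hat{v}_{t_{n(s)}}^{\star}$ at grid points, we have the pointwise domination $0 \le v_{n(s)}^{\star} \le \sup_{r\in[0,T]}\hat{v}_r^{\star}$ for all $s\in[0,T]$. The drift term is then bounded by $(T/2)\sup_{r\in[0,T]}\hat{v}_r^{\star}$, and Lemma \ref{vbounded} controls its $L^p$-norm directly.

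For the stochastic integrals, the Burkholder--Davis--Gundy inequality combined with the Cauchy--Schwarz inequality (or H\"older) gives
\begin{align*}
\mathbb{E}\left[\sup_{t\in[0,T]}\left|\int_0^t \sqrt{v_{n(s)}^{\star}}\, dW_s\right|^p\right] \le C_p\, \mathbb{E}\left[\left(\int_0^T v_{n(s)}^{\star}\, ds\right)^{p/2}\right] \le C_p\, T^{p/2}\, \mathbb{E}\left[\sup_{r\in[0,T]}\bigl|\hat{v}_r^{\star}\bigr|^{p/2}\right],
\end{align*}
and the same estimate holds for the $B$-integral. Applying Lemma \ref{vbounded} with exponent $p/2$ (and $p$ for the drift bound) yields the claim.

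There is no substantive obstacle here: the result is a direct consequence of Lemma \ref{vbounded}, the BDG inequality, and the fact that the piecewise-frozen coefficients $v_{n(s)}^{\star}$ inherit all moment bounds of $\hat{v}^{\star}$ since they agree at grid points.
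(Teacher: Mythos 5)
Your proof is correct and follows essentially the same route the paper indicates: the paper states only that the result ``follows directly from Lemma \ref{vbounded} using the H\"older and the Burkholder--Davis--Gundy inequalities,'' and you supply exactly those ingredients (telescoping to a single It\=o process, domination of the frozen coefficient by $\sup_r \hat{v}_r^{\star}$, BDG for the stochastic integrals, and the moment bound from Lemma \ref{vbounded}). The only cosmetic point worth noting is that when $1 \le p < 2$ the exponent $p/2$ in $\mathbb{E}[\sup_r|\hat{v}_r^{\star}|^{p/2}]$ drops below $1$, but this is harmless since finiteness of a higher moment (e.g.\ the first) from Lemma \ref{vbounded} implies finiteness of the lower one.
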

	\begin{proof}
		This follows directly from Lemma \ref{vbounded} using the H\"older and the Burkholder-Davis-Gundy inequalities.
	\end{proof}
	\smallskip
	We are now interested in the probability of $Z^{\star}$ becoming less or equal to $0$. The next lemma is similar to Lemma 3.7 in \cite{BO}{{, and we include its proof for completeness.}}
	\begin{lemma}\label{lemma3.7}
		Let $\Delta t< \frac{1}{\kappa }$. We have
		\begin{align*}
			P(Z_t^{\star}\le0)\le \mathbb{E}\left[\exp\left(-\frac{{v}_{n(t)}^{\star}(1-\kappa\Delta t)^2}{2\sigma^2\Delta t}\right)\right], \qquad t\in[0,T] \setminus \{t_0,t_1, \ldots, t_N\},
		\end{align*}
		for $\star \in \{sym, abs\}$.
	\end{lemma}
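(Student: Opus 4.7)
The plan is to condition on $\mathcal{F}_{\eta(t)}$ and exploit the fact that, one step at a time, $Z_t^{\star}$ is conditionally Gaussian. Write $\delta = t - \eta(t) \in (0, \Delta t]$. Since $v_{n(t)}^{\star}$ is $\mathcal{F}_{\eta(t)}$-measurable and $W_t - W_{\eta(t)}$ is independent of $\mathcal{F}_{\eta(t)}$ with variance $\delta$, the defining formula \eqref{defZ} gives, conditionally on $\{v_{n(t)}^{\star} = v\}$ with $v > 0$, that $Z_t^{\star}$ is Gaussian with mean $m(v) = v(1-\kappa\delta) + \kappa\theta\delta$ and variance $\sigma^2 v \delta$. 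The assumption $\Delta t < 1/\kappa$ and $\delta \le \Delta t$ ensure $1 - \kappa \delta \ge 1 - \kappa \Delta t > 0$, so $m(v) \ge 0$, and the normal tail lies in its upper half-line.

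Next I would use the standard Chernoff tail bound $\Phi(-x) \le \tfrac{1}{2} e^{-x^2/2}$ for $x \ge 0$. Since $\kappa \theta \delta \ge 0$, dropping this nonnegative term from the numerator only makes the argument of $\Phi$ less negative, hence
\begin{equation*}
P(Z_t^{\star} \le 0 \mid \mathcal{F}_{\eta(t)})
= \Phi\!\left(-\frac{m(v_{n(t)}^{\star})}{\sigma\sqrt{v_{n(t)}^{\star}\delta}}\right)
\le \Phi\!\left(-\frac{\sqrt{v_{n(t)}^{\star}}(1-\kappa\delta)}{\sigma\sqrt{\delta}}\right)
\le \frac{1}{2}\exp\!\left(-\frac{v_{n(t)}^{\star}(1-\kappa\delta)^2}{2\sigma^2 \delta}\right)
\end{equation*}
on the event $\{v_{n(t)}^{\star} > 0\}$. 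On $\{v_{n(t)}^{\star} = 0\}$ we have deterministically $Z_t^{\star} = \kappa \theta \delta > 0$ (recall $t \notin \{t_0, \ldots, t_N\}$), so the conditional probability vanishes and the estimate trivially persists.

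To replace $\delta$ by $\Delta t$, I would observe that $\delta \mapsto (1-\kappa\delta)^2/\delta$ is decreasing on $(0, 1/\kappa]$ (both factors $(1-\kappa\delta)^2$ and $1/\delta$ decrease there), so $(1-\kappa\delta)^2/\delta \ge (1-\kappa\Delta t)^2/\Delta t$ and the exponent can only become larger (more negative). Combining with the previous display gives
\begin{equation*}
P(Z_t^{\star} \le 0 \mid \mathcal{F}_{\eta(t)}) \le \exp\!\left(-\frac{v_{n(t)}^{\star}(1-\kappa\Delta t)^2}{2\sigma^2\Delta t}\right),
\end{equation*}
and taking expectations yields the claim.

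There is no genuinely hard step here; the whole argument is Gaussian conditioning plus a Chernoff bound. The only point that needs care is the monotonicity of $\delta \mapsto (1-\kappa\delta)^2/\delta$ used to pass from $\delta$ to $\Delta t$, and the separate treatment of $\{v_{n(t)}^{\star}=0\}$ where the Gaussian reasoning degenerates.
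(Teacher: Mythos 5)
Your proof is correct and follows essentially the same route as the paper: condition on $\mathcal{F}_{\eta(t)}$ so that $Z_t^{\star}$ is conditionally Gaussian, apply a Gaussian tail bound, discard the nonnegative drift term $\kappa\theta\delta$, and use the monotonicity of $\delta \mapsto (1-\kappa\delta)^2/\delta$ on $(0,1/\kappa]$ to pass from $\delta$ to $\Delta t$ before taking expectations. The only cosmetic difference is that you carry the sharper factor $\tfrac{1}{2}$ from the Chernoff bound and then drop it, whereas the paper's bound $P(G<\beta)\le\exp(-\beta^2/(2\nu^2))$ omits it from the start; and you spell out the degenerate case $v_{n(t)}^{\star}=0$ and the monotonicity argument a little more explicitly than the paper does.
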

	
	\begin{proof}   Let $t\in[0,T] \setminus \{t_0,t_1, \ldots, t_N\}$.
		Since $P(Z_t^{\star}=0)=0$ we need to consider only $P(Z_t^{\star} <0)$. Throughout this proof, we will drop the $\star$-label to simplify the notation.
		By the definition of $Z$ in \eqref{defZ}, we have
		\begin{align*}
			P(Z_t < 0|{v}_{n(t)}=v)\le P\left(W_t-W_{\eta(t)}< \frac{-v(1-\kappa (t-\eta(t)))-\kappa\theta (t-\eta(t))}{\sigma\sqrt{v}}\right)
		\end{align*} for
		$v >0$ and 
		\begin{align*}
			P(Z_t < 0|{v}_{n(t)}=0)=0.
		\end{align*}
		
		For a centered Gaussian random variable $G$  with variance $\nu^2>0$, it holds that
		\begin{align*}
			P\left(G<\beta\right)\le\exp\left(-\frac{\beta^2}{2\nu^2}\right)
		\end{align*}
		for $\beta<0$. Therefore, we have
		\begin{align*}
			P(Z_t\le0)&\le\mathbb{E}\left[\exp\left(-\frac{\left({v}_{n(t)}(1-\kappa (t-\eta(t)) )+\kappa\theta (t-\eta(t))\right)^2}{2\sigma^2{v}_{n(t)} (t-\eta(t))}\right)\mathbbm{1}_{\{{v}_{n(t)}>0\}}\right]\\
			&\le\mathbb{E}\left[\exp\left(-\frac{\left({v}_{n(t)}(1-\kappa (t-\eta(t)) )\right)^2}{2\sigma^2{v}_{n(t)} (t-\eta(t))}\right)\right]\\
			&=\mathbb{E}\left[\exp\left(-\frac{{v}_{n(t)}(1-\kappa (t-\eta(t)))^2}{2\sigma^2 (t-\eta(t))} \right)\right].
		\end{align*}
		Since 
		$$ \frac{(1-\kappa (t-\eta(t)))^2}{t-\eta(t)} \geq  \frac{(1-\kappa \Delta t )^2}{\Delta t}, \qquad t \in [0,T] \setminus \{t_0,t_1, \ldots, t_N \}, $$
		the assertion follows.
		
	\end{proof}
	
	\smallskip
	
	For the further control of $P(Z_t^{\star} \leq 0)$ we will need the following technical result on a sequence that was analyzed by Cozma and Reisinger in \cite{CS2}. We are now giving a different and simplified bound which is crucial for our error analysis.
	
	\begin{lemma}\label{technical_sequence}
		Suppose that $\Delta t < \frac{1}{\kappa}$ and set
		\begin{align*}
			\alpha_N=\frac{1-\kappa\Delta t}{2}.
		\end{align*}
		(i)	Consider the sequence $(c_j)_{0\le j\le N}$ with
		\begin{align*}
			c_0=\alpha_N, \qquad c_1=\alpha_N-\alpha^2_N, \qquad c_{j+1}=c^2_j+\alpha_N-\alpha^2_N, \quad j=1, \ldots, N-1.
		\end{align*}
		Then, we have
		\begin{align*}
			c_j\le 1-\alpha_N-\frac{ \varepsilon (1-\varepsilon)}{ 1+ \varepsilon(j-1)},\qquad   j=1, \ldots, N,
		\end{align*}
		for all $\varepsilon\in(0,1/2]$. \\
		(ii)  Define the sequence $(a_j)_{0\le j\le N}$ by
		\begin{align*}
			a_j=\frac{2(\alpha_N-c_j)}{\sigma^2\Delta t}, \qquad j=0, \ldots, N.
		\end{align*}
		Then, we have $a_j \geq 0$ for $j=0, \ldots, N$.
		Moreover, let $\varepsilon \in (0, 1/2]$ and
			\begin{align} \label{const_lemma3.7} c= \exp\left(\kappa\left(\nu T+\frac{2v_0}{\sigma^2}\right)\right)\left(\max\left\{1,\frac{\sigma^2\nu}{v_0e}\right\}\right)^{\nu}. \end{align} Then, we have 
			\begin{align*}
				\exp\left(-\kappa\theta\sum_{j=0}^{k-1}a_{j+1}\Delta t\right){\exp\left(-v_0a_{k+1}\right)} \leq   c \left(\frac{\Delta t}{\varepsilon}\right)^{\nu(1-\varepsilon)}  
		\end{align*}
		for all $k=1, \ldots, N$.

	\end{lemma}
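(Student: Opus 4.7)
My plan is to prove the two parts in sequence, with (i) providing the technical input for (ii). For part (i), introduce the shifted sequence $\beta_j := 1 - \alpha_N - c_j$, so that the desired bound becomes $\beta_j \geq g_j$ with $g_j := \varepsilon(1-\varepsilon)/(1+\varepsilon(j-1))$. A short computation using $c_{j+1} = c_j^2 + \alpha_N(1-\alpha_N)$ and $(1-\alpha_N)^2 - c_j^2 = (1-\alpha_N - c_j)(1-\alpha_N + c_j)$ shows that $\beta_{j+1} = \beta_j(M - \beta_j) =: h(\beta_j)$ with $M := 2(1-\alpha_N) = 1+\kappa\Delta t \in (1,2]$, while $\beta_1 = (1-\alpha_N)^2 = M^2/4$. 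Then I would induct on $j$: the base case $\beta_1 \geq 1/4 \geq \varepsilon(1-\varepsilon) = g_1$ holds for any $\varepsilon \in (0,1/2]$. For the inductive step, $h$ is increasing on $[0, M/2]$, and the invariant $\beta_j \leq M/2$ is preserved because $h(M/2) = M^2/4 \leq M/2$ (using $M \leq 2$); since moreover $g_j \leq 1/4 \leq M/2$, monotonicity together with $M \geq 1$ gives $\beta_{j+1} \geq h(g_j) \geq g_j(1-g_j)$. The induction then reduces to the purely algebraic inequality $g_j(1-g_j) \geq g_{j+1}$, which, after clearing denominators with $A := 1+\varepsilon(j-1)$, becomes $\varepsilon^2(A - 1 + \varepsilon) = \varepsilon^3 j \geq 0$.

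For part (ii), the non-negativity $a_j \geq 0$ follows from a trivial induction showing $0 \leq c_j \leq \alpha_N$. For the main estimate, I first rewrite $\kappa\theta\,a_{j+1}\Delta t = \nu(\alpha_N - c_{j+1})$ using $\nu = 2\kappa\theta/\sigma^2$, so that the sum equals $\nu\sum_{j=1}^k(\alpha_N - c_j)$. Part (i) gives $\alpha_N - c_j \geq -\kappa\Delta t + g_j$, and the integral comparison $\sum_{j=1}^k g_j \geq (1-\varepsilon)\ln(1+\varepsilon k)$ combined with $k\Delta t \leq T$ yields
\[
\exp\!\Big(-\kappa\theta\sum_{j=0}^{k-1}a_{j+1}\Delta t\Big) \leq \exp(\nu\kappa T)\,(1+\varepsilon k)^{-\nu(1-\varepsilon)}.
\]
The same application of part (i) at index $k+1$ (valid after extending the recursion beyond $N$, since the induction does not use the upper endpoint) gives $v_0 a_{k+1} \geq -2\kappa v_0/\sigma^2 + 2v_0 g_{k+1}/(\sigma^2\Delta t)$, hence $\exp(-v_0 a_{k+1}) \leq \exp(2\kappa v_0/\sigma^2)\exp(-2v_0 g_{k+1}/(\sigma^2\Delta t))$.

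To finish, multiply the two bounds, substitute $g_{k+1} = \varepsilon(1-\varepsilon)/(1+\varepsilon k)$, and introduce the change of variable $s := \varepsilon/[\Delta t(1+\varepsilon k)]$. The product factors as $\exp(\kappa(\nu T + 2v_0/\sigma^2))\,(\Delta t/\varepsilon)^{\nu(1-\varepsilon)}$ times $s^{\nu(1-\varepsilon)}\exp(-2v_0(1-\varepsilon)s/\sigma^2)$, and since the function $u \mapsto u^\alpha e^{-\lambda u}$ attains its maximum $(\alpha/(e\lambda))^\alpha$, with $\alpha = \nu(1-\varepsilon)$ and $\lambda = 2v_0(1-\varepsilon)/\sigma^2$ so that $\alpha/(e\lambda) = \nu\sigma^2/(2v_0 e)$, the residual factor is uniformly bounded by $(\nu\sigma^2/(2v_0 e))^{\nu(1-\varepsilon)} \leq (\max\{1,\sigma^2\nu/(v_0 e)\})^\nu$ for $\varepsilon \in (0,1/2]$. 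I expect the main obstacle to be simply keeping all the constants aligned through the two applications of part (i) and the final change of variable; the induction for part (i) and the optimization step in part (ii) are both routine once the substitution $\beta_j = 1 - \alpha_N - c_j$ is identified as the key simplification.
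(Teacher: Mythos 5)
Your proof is correct and follows essentially the same strategy as the paper's: an induction for part (i) reducing to a one-line algebraic inequality (your $\varepsilon^3 j\geq 0$ is exactly what the paper's inequality \eqref{final_cj} reduces to after clearing denominators), followed by an integral comparison and an optimization of $x^{\alpha}e^{-x}$ for part (ii). Your substitution $\beta_j=1-\alpha_N-c_j$, which turns the recursion into the logistic map $\beta_{j+1}=\beta_j(M-\beta_j)$, is a cleaner way to organize the same induction that the paper carries out directly on $c_j$; note also that both you and the paper implicitly invoke $c_{N+1}$ (hence $a_{N+1}$) when $k=N$, which lies just outside the stated index range but extends trivially by the same recursion.
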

	
	\begin{proof} (i)
		Since $\Delta t\in(0,\frac{1}{\kappa})$, we know that
		\begin{align*}
			\alpha_N=\frac{1-\kappa\Delta t}{2}<\frac{1}{2}, \qquad  \alpha_N>\frac{1-\kappa\frac{1}{\kappa}}{2}=0
		\end{align*}
		and therefore $\alpha_N\in(0,1/2)$. 
		By construction, we then have $c_j \geq0$, $j=0, \ldots, N$.
		Now let $\varepsilon \in (0,1/2]$. We show that 
		\begin{align*}
			c_j\le 1-\alpha_N-\frac{1-\varepsilon}{j-1+\varepsilon^{-1}}, \qquad j=1, \ldots,  N,
		\end{align*}
		by induction. For $j=1$, we have
		\begin{align*}
			c_1=\alpha_N-\alpha^2_N=1-\alpha_N-\left(1-\alpha_N\right)^2\le 1-\alpha_N-\frac{1}{4}\le 1-\alpha_N-\frac{1-\varepsilon}{\varepsilon^{-1}},
		\end{align*}
		since $1/4  \geq (1-\varepsilon)\varepsilon$.
		
		Suppose that the statement holds for a fixed $j\in\{1,...,N\}$. Then, we have
		\begin{align*}
			c_{j+1}=\alpha_N-\alpha^2_N+c^2_j  \le &   \, \alpha_N-\alpha^2_N+\left(1-\alpha_N-\frac{1-\varepsilon}{j-1+\varepsilon^{-1}}\right)^2\\
			=& \,  \alpha_N-\alpha^2_N+1-2\alpha_N +\alpha_N^2 -2(1-\alpha_N)\frac{1-\varepsilon}{j-1+\varepsilon^{-1}}\\
			& \quad  \,\,\,  +\frac{(1-\varepsilon)^2}{(j-1+\varepsilon^{-1})^2}\\
			=& \, 1-\alpha_N-2(1-\alpha_N)\frac{1-\varepsilon}{j-1+\varepsilon^{-1}}+\frac{(1-\varepsilon)^2}{(j-1+\varepsilon^{-1})^2}.
		\end{align*}
		Now for
		$$ c_{j+1} \le 1-\alpha_N-\frac{1-\varepsilon}{j+\varepsilon^{-1}}$$
		to be true, it is sufficient that
		\begin{align} \label{final_cj}
			\frac{1-\varepsilon}{j-1+\varepsilon^{-1}}-\frac{(1-\varepsilon)^2}{(j-1+\varepsilon^{-1})^2}\ge \frac{1-\varepsilon}{j+\varepsilon^{-1}}
		\end{align}
		since $2(1-\alpha_N)\in(1,2)$. Equation \eqref{final_cj} can be verified by a simple computation.
		
		\smallskip

		\smallskip
		(ii) Since  $c_{j+1}=c^2_j+\alpha_N-\alpha^2_N$ 
		and $c_0 = \alpha_N$, $c_1 =\alpha_N - \alpha_N^2 \leq \alpha_N$, we can establish by induction that $c_j \leq \alpha_N$. Since
		$$	a_j=\frac{2(\alpha_N-c_j)}{\sigma^2\Delta t}$$ we therefore have $a_j \geq 0$ for $j=0, \ldots, N$.
		It follows that
		\begin{align*}
			-\kappa\theta\sum_{j=0}^{k-1}a_{j+1}\Delta t=\frac{2\kappa\theta}{\sigma^2}\sum_{j=0}^{k-1}\left(c_{j+1}-\alpha_N\right)&\le \frac{2\kappa\theta}{\sigma^2}\sum_{j=0}^{k-1}\left(1-2\alpha_N-\frac{1-\varepsilon}{j+\varepsilon^{-1}}\right)\\
			&\le\frac{2\kappa\theta}{\sigma^2} \int_{0}^{k}\left(1-2\alpha_N-\frac{1-\varepsilon}{j+\varepsilon^{-1}}\right)dj\\
			&=\frac{2\kappa\theta(1-\varepsilon)}{\sigma^2}\left(\ln(\varepsilon^{-1})-\ln(k+\varepsilon^{-1})\right)+\frac{2\kappa\theta}{\sigma^2}\kappa\Delta tk\\
			&\leq  \nu(1-\varepsilon) \ln\left(\frac{1}{1 + \varepsilon k}\right)+ \kappa \nu T.
		\end{align*}
		Using the definition of $a_{k+1}$ and $\alpha_N$, as well as the estimate for $c_{k+1}$ from (i) we obtain
				\begin{align*}
					\exp\left(-v_0a_{k+1}\right)&=\exp\left(\frac{2v_0}{\sigma^2\Delta t}\left(c_{k+1}-\alpha_N\right)\right)\\
					&\le\exp\left(\frac{2v_0}{\sigma^2\Delta t}\left(1-2\alpha_N-\frac{\varepsilon(1-\varepsilon)}{1+\varepsilon k}\right)\right)\\
					&\le\exp\left(\frac{2v_0\kappa}{\sigma^2}\right)\exp\left(-\frac{2v_0\varepsilon(1-\varepsilon)}{\sigma^2}\frac{1}{\Delta t}\frac{1}{1+\varepsilon k}\right)\\
					&\le\exp\left(\frac{2v_0\kappa}{\sigma^2}\right)\exp\left(-\frac{v_0\varepsilon}{\sigma^2}\frac{1}{\Delta t}\frac{1}{1+\varepsilon k}\right),
				\end{align*} 
				since we have $\varepsilon \in (0,1/2]$.
		Thus, we obtain
			\begin{align*}
				& \exp\left(-\kappa\theta\sum_{j=0}^{k-1}a_{j+1}\Delta t\right)	\exp\left(-v_0a_{k+1}\right) \\ & \qquad  \le \exp\left(\kappa\left(\nu T+\frac{2v_0}{\sigma^2}\right)\right)\exp\left(-\frac{v_0\varepsilon}{\sigma^2}\frac{1}{\Delta t}\frac{1}{1+\varepsilon k}\right)\left(\frac{1}{1+\varepsilon k}\right)^{\nu(1-\varepsilon)}
				\\ & \qquad   = \exp\left(\kappa\left(\nu T+\frac{2v_0}{\sigma^2}\right)\right)\exp\left(-\frac{v_0\varepsilon}{\sigma^2}\frac{1}{\Delta t}\frac{1}{1+\varepsilon k}\right)\left(\frac{v_0\varepsilon}{\sigma^2}\frac{1}{\Delta t}\frac{1}{1+\varepsilon k}\right)^{\nu(1-\varepsilon)} \left( \frac{\sigma^2 \Delta t }{v_0\varepsilon} \right)^{\nu(1-\varepsilon)}.
			\end{align*}
			The inequality 
			$$  x^\alpha\exp(-x)\le \alpha^\alpha\exp(-\alpha) , \qquad \alpha>0, \, x>0,$$ 	 
			and using again that $\varepsilon \in (0,1/2]$ now yield 	
			\begin{align*}
				\exp\left(-\kappa\theta\sum_{j=0}^{k-1}a_{j+1}\Delta t\right)	\exp\left(-v_0a_{k+1}\right)
				&\le\exp\left(\kappa\left(\nu T+\frac{2v_0}{\sigma^2}\right)\right) \left( \frac{\nu(1-\varepsilon)}{e}\right)^{\nu(1-\varepsilon)} \left(\frac{\sigma^2 \Delta t }{v_0\varepsilon}\right)^{\nu(1-\varepsilon)} 
				\\	&\le \exp\left(\kappa\left(\nu T+\frac{2v_0}{\sigma^2}\right)\right)\left(\frac{\sigma^2\nu}{v_0e}\right)^{\nu(1-\varepsilon)} \left(\frac{\Delta t}{\varepsilon}\right)^{\nu(1-\varepsilon)}\\
				&\le\exp\left(\kappa\left(\nu T+\frac{2v_0}{\sigma^2}\right)\right)\left(\max\left\{1,\frac{\sigma^2\nu}{v_0e}\right\}\right)^{\nu}\left(\frac{\Delta t}{\varepsilon}\right)^{\nu(1-\varepsilon)},
			\end{align*}
			which finishes the proof. 
	\end{proof}

	\smallskip

	\smallskip
	The next Lemma  gives an upper bound for the expression from Lemma \ref{lemma3.7}. It plays the same role  as  Lemma 3.6 in \cite{BO}  and  in comparison to this Lemma it removes the restriction on $\nu$ and also obtains a better estimate in terms of $\nu$ for $P(Z_t^{\star} \leq 0) $.
	
	\begin{proposition}\label{lemma3.6} 
		For $\Delta t < \frac{1}{\kappa}$ and $\varepsilon\in(0,1/2]$ we have that
		\begin{align}\label{termlemma3.6}
			\mathbb{E}\left[\exp\left(-\frac{{v}_{k}^{\star}(1-\kappa\Delta t)^2}{2\sigma^2\Delta t}\right)\right]\le c \left(\frac{\Delta t}{\varepsilon}\right)^{\nu(1-\varepsilon)}, \qquad k=0, \ldots, N,
		\end{align}
		and
		\begin{align}\label{problemma3.6}
			P(Z_t^{\star} \leq 0) \leq   c \left(\frac{\Delta t}{\varepsilon}\right)^{\nu(1-\varepsilon)}, \qquad t \in [0,T]  \setminus \{t_0,t_1, \ldots, t_N\},
		\end{align}
		for $\star \in \{sym, abs\}$, where $c$ is given by   \eqref{const_lemma3.7}.
	\end{proposition}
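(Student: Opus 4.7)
The plan is to establish the exponential moment bound \eqref{termlemma3.6} by an iterated Gaussian moment-generating-function argument; \eqref{problemma3.6} then follows directly by combining the resulting estimate with Lemma \ref{lemma3.7}. The key pointwise observation, which is the reason the two schemes can be treated simultaneously, is that $v_{k+1}^{\star}\ge Z_{k+1}^{\star}$ (since $|z|\ge z$ and $z^{+}\ge z$), so for any $\lambda\ge 0$
\[
\exp(-\lambda\, v_{k+1}^{\star})\le \exp(-\lambda\, Z_{k+1}^{\star}).
\]
Conditionally on $v_k^{\star}$, $Z_{k+1}^{\star}$ is Gaussian with mean $v_k^{\star}(1-\kappa\Delta t)+\kappa\theta\Delta t$ and variance $\sigma^2 v_k^{\star}\Delta t$, so its Laplace transform gives
\[
\mathbb{E}\!\left[\exp(-\lambda Z_{k+1}^{\star})\bigm| v_k^{\star}\right]
=\exp(-\lambda\kappa\theta\Delta t)\,\exp\!\left(-v_k^{\star}\phi(\lambda)\right),
\qquad
\phi(\lambda):=\lambda(1-\kappa\Delta t)-\tfrac{\sigma^2\Delta t}{2}\lambda^{2}.
\]

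Setting $\lambda_0:=(1-\kappa\Delta t)^{2}/(2\sigma^2\Delta t)$ and $\lambda_{j+1}:=\phi(\lambda_j)$, I iterate the conditional estimate $k$ times via the tower property (using $v_0^{\star}=v_0$ for the final step) to get
\[
\mathbb{E}\!\left[\exp(-\lambda_0 v_k^{\star})\right]\le
\exp\!\Biggl(-\kappa\theta\Delta t\sum_{j=0}^{k-1}\lambda_j\Biggr)\exp(-v_0\lambda_k).
\]
A short algebraic check shows that writing $\lambda_j=2(\alpha_N-\tilde c_j)/(\sigma^2\Delta t)$ converts the recursion $\lambda_{j+1}=\phi(\lambda_j)$ into $\tilde c_{j+1}=\tilde c_j^{\,2}+\alpha_N-\alpha_N^{2}$ with $\tilde c_0=\alpha_N$, which is precisely the sequence $(c_j)$ of Lemma \ref{technical_sequence}. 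Hence $\lambda_j=a_{j+1}$ for all $j\ge 0$, and the non-negativity $\lambda_j\ge 0$ needed to legitimize the pointwise comparison is guaranteed by Lemma \ref{technical_sequence}(ii). Plugging the identification into the iterated bound and invoking that lemma yields \eqref{termlemma3.6} for $k=1,\dots,N$; the case $k=0$ reduces to the deterministic estimate $\exp(-v_0\lambda_0)$, which by the same $x^{\alpha}e^{-x}\le(\alpha/e)^{\alpha}$ trick used in the proof of Lemma \ref{technical_sequence}(ii) is also bounded by $c(\Delta t/\varepsilon)^{\nu(1-\varepsilon)}$.

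The second assertion \eqref{problemma3.6} is then immediate: for $t\in[0,T]\setminus\{t_0,\dots,t_N\}$ Lemma \ref{lemma3.7} gives $P(Z_t^{\star}\le 0)\le \mathbb{E}[\exp(-v_{n(t)}^{\star}(1-\kappa\Delta t)^{2}/(2\sigma^2\Delta t))]$, and \eqref{termlemma3.6} applied with $k=n(t)$ closes the argument. The conceptual hurdle of the proof is recognizing that the recursion generated by the Gaussian Laplace transform is exactly the reparametrization of the Cozma--Reisinger sequence controlled in Lemma \ref{technical_sequence}; once that link is in place, the technically delicate bookkeeping involving the power $\nu(1-\varepsilon)$ has already been absorbed into that lemma, and what remains is only the MGF computation together with a tower-property iteration.
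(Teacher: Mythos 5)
Your proof mirrors the paper's argument step for step: the pointwise comparison $v^\star\ge Z$ combined with the Gaussian MGF produces the recursion $\lambda_{j+1}=\phi(\lambda_j)$, you iterate $k$ times via the tower property, and you reduce to Lemma \ref{technical_sequence}(ii), with your $\lambda_j$ equal to the paper's $a_{j+1}$. One small slip: the reparametrization $\lambda_j=2(\alpha_N-\tilde c_j)/(\sigma^2\Delta t)$ must be started at $\tilde c_0=\alpha_N-\alpha_N^2=c_1$, not at $\tilde c_0=\alpha_N$ (the pure recursion initialized at $\alpha_N$ is constant, giving $\lambda_j\equiv 0$), since in Lemma \ref{technical_sequence} the value $c_1$ is prescribed separately rather than obtained from $c_0$ by the recursion; with this correction the identification $\lambda_j=a_{j+1}$ that you then assert is exactly right. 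Your explicit treatment of the $k=0$ case is a useful refinement that the paper's proof leaves implicit.
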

	\begin{proof}
		Lemma \ref{lemma3.7} and \eqref{termlemma3.6} directly give \eqref{problemma3.6}. So it remains to show  \eqref{termlemma3.6}.

		The first step of this proof is to describe a sequence $(a_j)_{0\le j\le N}$ whose first element is equal to $\frac{(1-\kappa\Delta t)^2}{2\sigma^2\Delta t}$ and which has some suitable properties to bound the term on the left side of \eqref{termlemma3.6}. Suppose that $\Delta t < \frac{1}{\kappa}$. Define  the sequence $(a_j)_{0 \leq j \leq N}$ as in the previous Lemma, i.e.
		\begin{align*}
			a_j=\frac{2(\alpha_N-c_j)}{\sigma^2\Delta t}
		\end{align*}
		with
		\begin{align*}
			c_0=\alpha_N, \qquad c_1=\alpha_N-\alpha^2_N, \qquad c_{j+1}=c^2_j+\alpha_N-\alpha^2_N, \quad j=1, \ldots, N-1,
		\end{align*}
		and $\alpha_N = \frac{1-\kappa\Delta t}{2}$. In particular, we have $a_0=0,$ $$a_1=\frac{2\alpha^2_N}{\sigma^2\Delta t}=\frac{(1-\kappa\Delta t)^2}{2\sigma^2\Delta t} $$
		and 
		\begin{align*}
			a_{j+1}=\frac{2(\alpha_N-c_{j+1})}{\sigma^2\Delta t}=\frac{2(\alpha^2_N-c^2_j)}{\sigma^2\Delta t}=\frac{4\alpha_N(\alpha_N-c_j)-2(\alpha_N-c_j)^2}{\sigma^2\Delta t}=2\alpha_Na_j-\frac{1}{2}a^2_j\sigma^2\Delta t.
		\end{align*}
		
		Next, we take a look at 
		\begin{align*}
			\mathbb{E}\left[\exp\left(-{v}_{k}^{\star}a_i\right)\right]=\mathbb{E}\left[\mathbb{E}\left[\exp\left(-{v}_{k}^{\star}a_i\right)\big|\cF_{t_{k-1}}\right]\right]
		\end{align*}
		and bound the conditional expectation using that $a_i \geq 0$, $|v| \geq v$ and  $v^+ \geq v$, respectively. We have
		\begin{align*}
			&\mathbb{E}\left[\exp\left(-{v}_{k}^{\star}a_i\right)\big|\cF_{t_{k-1}}\right] \\ & \quad \le\mathbb{E}\left[\exp\left(-a_i\left(\kappa\theta\Delta t+v_{{k-1}}^{\star}(1-\kappa\Delta t)+\sigma\sqrt{{v}_{{k-1}}^{\star}}\left(W_{t_k}-W_{t_{k-1}}\right) \right)\right)\big|\cF_{t_{k-1}}\right]\\
			& \quad=\exp\left(-a_i\left(\kappa\theta\Delta t+{v}_{{k-1}}^{\star}(1-\kappa\Delta t)\right)\right)\mathbb{E}\left[\exp\left(-a_i\sigma\sqrt{{v}_{{k-1}}^{\star}}\left(W_{t_k}-W_{t_{k-1}}\right)\right)\big|\cF_{t_{k-1}}\right]\\
			& \quad = \exp\left(-a_i\left(\kappa\theta\Delta t+{v}_{{k-1}}^{\star}(1-\kappa\Delta t)\right)\right)\exp\left(\frac{1}{2}a^2_i\sigma^2{v}_{{k-1}}^{\star}\Delta t\right).
		\end{align*}
		Since
		\begin{align*}
			a_{i+1}=a_i(1-\kappa \Delta t)-\frac{1}{2}a^2_i\sigma^2\Delta t,
		\end{align*}
		it follows
		\begin{align*}
			\mathbb{E}\left[\exp\left(-{v}_{k}^{\star}a_i\right)\right]\le\exp\left(-a_i\kappa\theta\Delta t\right)\mathbb{E}\left[\exp\left(-{v}_{{k-1}}^{\star}a_{i+1}\right)\right].
		\end{align*}
		Plugging in $a_1$ and applying this upper bound $k$ times, we arrive at
		\begin{align*}
			\mathbb{E}\left[\exp\left(-\frac{{v}_{k}^{\star}(1-\kappa\Delta t)^2}{2\sigma^2\Delta t}\right)\right]=\mathbb{E}\left[\exp\left(-{v}_{k}^{\star}a_1\right)\right]\le\exp\left(-\kappa\theta\sum_{j=1}^{k}a_{j}\Delta t\right)\exp\left(-v_0a_{k+1}\right).
		\end{align*}
		The assertion now follows from the second part of the previous Lemma. 
	\end{proof}
	\smallskip

	We also need the following two $L^p$-results.
	\begin{lemma}\label{localsym}
		For all $p\ge1$ there exists a constant $C_{p}>0$  such that 
		\begin{align*}
			\sup\limits_{t\in[0,T]}\mathbb{E}\left[\big|\hat{v}_t^{\star}-\hat{v}_{\eta(t)}^{\star}\big|^{p}\right]\le C_{p}\left(\Delta t\right)^{\frac{p}{2}}
		\end{align*} holds
		for  $\star \in \{sym, abs\}$. 
		\begin{proof}
			The proof for the Symmetrized Euler can be found in \cite{BER}. We prove the statement for the Absorbed Euler by a similar approach and drop  the $abs$-label to simplify the notation.  We have
			\begin{align*}
				\sup\limits_{t\in[0,T]}\mathbb{E}\left[\left|\hat{v}_t-\hat{v}_{\eta(t)}\right|^{p}\right]= \sup\limits_{k=0,...,N-1}\sup\limits_{t\in[t_k,t_{k+1}]}\mathbb{E}\left[\left|\hat{v}_t-\hat{v}_{t_k}\right|^{p}\right].
			\end{align*}
			Since 
					\begin{align*}
						\left|\left(v+z\right)^+-v \right| \leq |z| 
					\end{align*}
					for $v>0$ and $z\in\mathbb{R}$, we have for $t\in[t_k,t_{k+1}]$ that
			\begin{align*}
				\left|\hat{v}_t-\hat{v}_{t_k}\right|^{p}&=\left|\left(\hat{v}_{t_k}+\kappa\left(\theta-\hat{v}_{t_k}\right)(t-t_k)+\sigma\sqrt{\hat{v}_{t_k}}\left(W_{t}-W_{t_k}\right)\right)^+-\hat{v}_{t_k}\right|^p \\
				&\le\left|\kappa\left(\theta-\hat{v}_{t_k}\right)(t-t_k)+\sigma\sqrt{\hat{v}_{t_k}}\left(W_{t}-W_{t_k}\right)\right|^{p}\\
				&\le2^{p-1}\left(\Delta t\right)^{p}\left|\kappa\left(\theta-\hat{v}_{t_k}\right)\right|^{p}+2^{p-1}\sigma^{p}\hat{v}_{t_k}^{p/2}\left|W_{t}-W_{t_k}\right|^{p}.
			\end{align*}
			Using Lemma \ref{vbounded} we obtain for all $k =0, \ldots, N-1$ that
			\begin{align*}
				\mathbb{E}\left[\left|\kappa\left(\theta-\hat{v}_{t_k}\right)\right|^{p}\right]\le C_p
			\end{align*}
			and
			\begin{align*}
				\mathbb{E}\left[\hat{v}_{t_k}^{p/2}\left|W_{t}-W_{t_k}\right|^{p}\right]\le \left( \mathbb{E}\left[\hat{v}_{t_k}^{p}\right] \right)^{\frac{1}{2}} \left( \mathbb{E}\left[\left|W_{t}-W_{t_k}\right|^{2p}\right] \right)^{\frac{1}{2}}
				&\le C_p\left(\Delta t\right)^{\frac{p}{2}}.
			\end{align*}
			Therefore we have
			\begin{align*}
				\sup\limits_{k=0,...,N-1}	\sup\limits_{t\in[t_k,t_{k+1}]}\mathbb{E}\left[\left|\hat{v}_t-\hat{v}_{t_k}\right|^{p}\right]\le C_p\left(\Delta t\right)^{\frac{p}{2}}
			\end{align*}
			and the statement follows.
		\end{proof}
	\end{lemma}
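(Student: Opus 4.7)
The plan is to handle both schemes simultaneously by observing that each can be written in the form $\hat{v}_t^\star = \Phi_\star(Z_t^\star)$, where $\Phi_{sym}(y) = |y|$ and $\Phi_{abs}(y) = y^+$. Both maps are $1$-Lipschitz on $\mathbb{R}$ and satisfy $\Phi_\star(v) = v$ for all $v \geq 0$. Since $v_{n(t)}^\star \geq 0$ by construction, this immediately yields the key pointwise estimate
\begin{align*}
\big|\hat{v}_t^\star - \hat{v}_{\eta(t)}^\star\big| = \big|\Phi_\star(Z_t^\star) - \Phi_\star(v_{n(t)}^\star)\big| \leq \big|Z_t^\star - v_{n(t)}^\star\big|,
\end{align*}
and by the definition \eqref{defZ} the right-hand side equals $\bigl|\kappa(\theta - v_{n(t)}^\star)(t-\eta(t)) + \sigma\sqrt{v_{n(t)}^\star}(W_t - W_{\eta(t)})\bigr|$. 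This compresses the two cases treated separately in \cite{BER} and in the statement into a single line.

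Next, I would raise the estimate to the $p$-th power, split via $|a+b|^p \leq 2^{p-1}(|a|^p + |b|^p)$, and bound the two contributions separately. The drift term is at most $C_p (\Delta t)^p \bigl(1 + |v_{n(t)}^\star|^p\bigr)$, which by Lemma \ref{vbounded} has expectation of order $(\Delta t)^p$. For the diffusion term, conditioning on $\mathcal{F}_{\eta(t)}$ (using that $W_t - W_{\eta(t)}$ is independent of $\mathcal{F}_{\eta(t)}$ and hence of $v_{n(t)}^\star$), or simply applying Cauchy--Schwarz, I get
\begin{align*}
\mathbb{E}\Bigl[\bigl(v_{n(t)}^\star\bigr)^{p/2} |W_t - W_{\eta(t)}|^p\Bigr] \leq \bigl(\mathbb{E}[(v_{n(t)}^\star)^p]\bigr)^{1/2}\bigl(\mathbb{E}[|W_t - W_{\eta(t)}|^{2p}]\bigr)^{1/2} \leq C_p (\Delta t)^{p/2},
\end{align*}
using Lemma \ref{vbounded} and the standard Gaussian moment bound. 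Taking the supremum over $t \in [0,T]$ and noting that the diffusion term dominates yields the claimed $(\Delta t)^{p/2}$ rate.

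There is no real obstacle: the only subtlety is verifying that $|\Phi_\star(v+z) - v| \leq |z|$ holds for all $v \geq 0$ and $z \in \mathbb{R}$ in both cases, which for $\Phi_{abs}$ follows by separating $v+z \geq 0$ (then the difference is $|z|$) from $v+z < 0$ (then the difference is $v \leq -z = |z|$), and for $\Phi_{sym}$ is immediate from the $1$-Lipschitz property of $|\cdot|$. Overall this approach avoids any reliance on reflection-specific arguments and reduces the proof for the absorbed scheme to the same one-line moment estimate that works for the symmetrized scheme.
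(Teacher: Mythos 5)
Your proposal is correct and takes essentially the same route as the paper: the pointwise bound $|\hat{v}_t^\star - \hat{v}_{\eta(t)}^\star| \le |\kappa(\theta - v_{n(t)}^\star)(t-\eta(t)) + \sigma\sqrt{v_{n(t)}^\star}(W_t - W_{\eta(t)})|$, followed by $|a+b|^p \le 2^{p-1}(|a|^p+|b|^p)$, Cauchy--Schwarz, Lemma~\ref{vbounded}, and Gaussian moment bounds. The one modest improvement is that you derive the key pointwise inequality for both schemes at once from the observation that $|\cdot|$ and $(\cdot)^+$ are $1$-Lipschitz and fix $[0,\infty)$, whereas the paper proves only the absorbed case this way (using $|(v+z)^+-v|\le|z|$) and cites \cite{BER} for the symmetrized one; your unification is clean and costs nothing.
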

	
	\smallskip
	By a similar calculation and with Lemma \ref{vbounded}, we obtain the following lemma for the discretization of the price process.
	
	\begin{lemma}\label{localxsymm}
		For all $p\ge1$ there exists a constant $C_{p}>0$ such that
		\begin{align*}
			\sup\limits_{t\in[0,T]}\mathbb{E}\left[\big|\hat{x}_t^{\star}-\hat{x}^{\star}_{\eta(t)}\big|^{p}\right]\le C_{p}\left(\Delta t\right)^{\frac{p}{2}}
		\end{align*}
		holds for  $\star \in \{sym, abs\}$.
	\end{lemma}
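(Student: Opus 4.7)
The plan is to imitate the proof of Lemma \ref{localsym} very closely, now working with the three explicit contributions in the definition of $\hat{x}^\star_t$. For $t\in[t_k,t_{k+1}]$, the Itô-process representation gives
\begin{align*}
\hat{x}^{\star}_t - \hat{x}^{\star}_{t_k} = -\frac{1}{2}\hat{v}^{\star}_{t_k}(t-t_k) + \rho\sqrt{\hat{v}^{\star}_{t_k}}\bigl(W_t-W_{t_k}\bigr)+\sqrt{1-\rho^{2}}\sqrt{\hat{v}^{\star}_{t_k}}\bigl(B_t-B_{t_k}\bigr),
\end{align*}
so that I can reduce the problem to $\sup_{k}\sup_{t\in[t_k,t_{k+1}]}\mathbb{E}[|\hat{x}^\star_t - \hat{x}^\star_{t_k}|^p]$ in complete analogy to the previous lemma.

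Next I would apply the elementary inequality $|a+b+c|^p\le 3^{p-1}(|a|^p+|b|^p+|c|^p)$ and treat the three resulting summands separately. The drift term is bounded using $(t-t_k)^p\le (\Delta t)^p\le T^{p/2}(\Delta t)^{p/2}$ combined with the uniform moment estimate $\mathbb{E}[|\hat{v}^\star_{t_k}|^p]\le C_p$ from Lemma \ref{vbounded}. For each of the two stochastic integral terms I would exploit the fact that $\hat{v}^\star_{t_k}$ is $\mathcal{F}_{t_k}$-measurable while $W_t-W_{t_k}$ and $B_t-B_{t_k}$ are independent of $\mathcal{F}_{t_k}$; conditioning (or, equivalently, Cauchy–Schwarz together with Gaussian moments) yields
\begin{align*}
\mathbb{E}\bigl[\hat{v}^{\star}_{t_k}{}^{p/2}\bigl|W_t-W_{t_k}\bigr|^{p}\bigr] = \mathbb{E}\bigl[\hat{v}^{\star}_{t_k}{}^{p/2}\bigr]\,\mathbb{E}\bigl[|W_t-W_{t_k}|^{p}\bigr] \le C_p(\Delta t)^{p/2},
\end{align*}
and identically for the $B$-increment. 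Summing the three bounds and taking suprema over $k$ and $t$ yields the claimed order $(\Delta t)^{p/2}$.

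There is no genuine obstacle here: the argument is structurally identical to the one given for the volatility increments in Lemma \ref{localsym}, with the reflection/absorption nonlinearity no longer present, so the estimate is in fact cleaner. The only point that deserves care is to keep track of measurability when applying the conditioning step, which is what converts the product of the (frozen) volatility factor and the independent Brownian increment into the product of their moments, thereby producing the factor $(\Delta t)^{p/2}$ that governs the whole bound.
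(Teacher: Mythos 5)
Your proposal is correct and fills in exactly the "similar calculation" the paper alludes to without writing out: you decompose the increment $\hat{x}^\star_t-\hat{x}^\star_{t_k}$ into drift and two stochastic-integral pieces, apply $|a+b+c|^p\le 3^{p-1}(|a|^p+|b|^p+|c|^p)$, bound the drift by $(\Delta t)^p\le T^{p/2}(\Delta t)^{p/2}$ together with Lemma~\ref{vbounded}, and handle the Brownian increments via the $\mathcal{F}_{t_k}$-measurability of $\hat{v}^\star_{t_k}$ together with the independence of $W_t-W_{t_k}$ and $B_t-B_{t_k}$ from $\mathcal{F}_{t_k}$. The only cosmetic difference from the template proof of Lemma~\ref{localsym} is that you factor the expectations by independence rather than applying Cauchy--Schwarz to $\hat{v}_{t_k}^{p/2}|W_t-W_{t_k}|^p$; both yield the same $C_p(\Delta t)^{p/2}$, so this is the paper's intended argument.
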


	\subsection{Properties of the local time}
	We need an upper bound for the expected local time of $Z^{\star}$ in $0$. Our proof follows similar ideas as the proof of Proposition 3.5 in \cite{BO},  but adds the results from Section \ref{propscheme}.
	\begin{proposition}\label{proplocaltime} 
		Let $\beta>0$, $\delta>0$, $\varepsilon \in (0,1/2]$, $\Delta t\le\frac{1}{2\kappa}$ and $\star \in \{sym, abs\}$. Then, there exist constants $C_{\delta}>0$ and $C_{\beta,\delta}>0$  such that
		\begin{align*}
			\mathbb{E}\left[L_t^0\left(Z^{\star}\right)-L_{\eta(t)}^0\left(Z^{\star}\right)\right]\le C_\delta\Delta t \left(\frac{\Delta t}{\varepsilon}\right)^{\nu\frac{1-\varepsilon}{1+\delta}}, \qquad t \in [0,T],
		\end{align*}
		and
		\begin{align*}
			\mathbb{E}\left[\left|L_t^0\left(Z^{\star}\right)-L_{\eta(t)}^0\left(Z^{\star}\right)\right|^{1+\beta}\right]^{\frac{1}{1+\beta}}\le C_{\beta,\delta}\left(\Delta t\right)^{\frac{1}{(1+\beta)^2}}\left(\frac{\Delta t}{\varepsilon}\right)^{\nu\frac{1-\varepsilon}{(1+\delta)(1+\beta)^2}} \qquad t \in [0,T].
		\end{align*}
		\begin{proof} (i) To simplify the notation, we drop the $\star$-label.
			By the occupation time formula, see e.g.~Theorem 7.1 in Chapter III of  \cite{KS}, we have for any $t\in[t_k,t_{k+1}]$ and for any non-negative Borel-measurable function $\phi : \mathbb{R} \rightarrow \mathbb{R}$ that $P$-a.s
			\begin{align*}
				\int_{\mathbb{R}}\phi(z)\left(L_t^z\left(Z\right)-L^z_{t_k}\left(Z\right)\right)dz&=\int_{t_k}^{t}\phi(Z_s)d\langle Z\rangle_s=\sigma^2\int_{t_k}^{t}\phi(Z_s)\hat{v}_{t_k}ds.
			\end{align*}
			Here $L^z(Z)$ is the local time of $Z$ in $z \in \mathbb{R}$.
			Since
			$$ P^{Z_s | \hat{v}_{\eta(s)}=v }= \mathcal{N} \left(v +  \kappa(\theta -v) (s-\eta(s)), \sigma^2 v (s- \eta(s)) \right)$$
			we have 
			for any $v > 0$ that
			\begin{align*}
				&\int_{\mathbb{R}}\phi(z)\mathbb{E}\left[L_t^z\left(Z\right)-L^z_{t_k}\left(Z\right)|\hat{v}_{t_k}=v\right]dz=\sigma^2\int_{t_k}^{t}v\mathbb{E}\left[\phi(Z_s)|\hat{v}_{t_k}=v\right]ds\\
				&\text{ }=\sigma\int_{\mathbb{R}}\phi(z)\int_{t_k}^{t}\frac{\sqrt{v}}{\sqrt{2\pi(s-t_k)}}\exp\left(-\frac{\left(z-v-\kappa(\theta-v)(s-t_k)\right)^2}{2\sigma^2v(s-t_k)}\right)dsdz.
			\end{align*}
			Since the above equation holds  for any non-negative Borel-measurable function $\phi$, we must have that
			\begin{align*}
				\mathbb{E}\left[L_t^z\left(Z\right)-L^z_{t_k}\left(Z\right)|\hat{v}_{t_k}=v\right]&=\sigma\int_{t_k}^{t}\frac{\sqrt{v}}{\sqrt{2\pi(s-t_k)}}\exp\left(-\frac{\left(z-v-\kappa(\theta-v)(s-t_k)\right)^2}{2\sigma^2v(s-t_k)}\right)ds
			\end{align*}
			for any $z\in\mathbb{R}$.
			Setting $z=0$ yields
			\begin{align*}
				\mathbb{E}\left[L_t^0\left(Z\right)-L_{t_k}^0\left(Z\right)|\hat{v}_{t_k}=v\right]&=\sigma\int_{t_k}^{t}\frac{\sqrt{v}}{\sqrt{2\pi (s-t_k)}}\exp\left(-\frac{\left(v+\kappa(\theta-v)\left(s-t_k\right)\right)^2}{2\sigma^2v\left(s-t_k\right)}\right)ds\\
				&\le \sigma\int_{t_k}^{t}\frac{\sqrt{v}}{\sqrt{2\pi \left(s-t_k\right)}}\exp\left(-\frac{v\left(1-\kappa \left(s-t_k\right)\right)^2}{2\sigma^2\left(s-t_k\right)}\right)ds.
			\end{align*}
			
			Since for $\delta>0$ there exist a $c_{\delta}>0$ such that $\sqrt{y}\exp(-y)\le c_{\delta}\exp\left(-\frac{y}{1+\delta}\right)$ for all $y \geq 0$, we have
			$$ \frac{\sqrt{ v}(1-\kappa  \left(s-t_k\right))}{\sqrt{2 \sigma^2  \left(s-t_k\right) }}\exp\left(-\frac{{v}\left(1-\kappa  \left(s-t_k\right) \right)^2}{2\sigma^2  \left(s-t_k\right)}\right) \leq c_{\delta} \exp\left(-\frac{{v}\left(1-\kappa  \left(s-t_k\right) \right)^2}{2\sigma^2  \left(s-t_k\right)(1+\delta) }\right).$$
			Moreover, since $1-\kappa  \left(s-t_k\right) \in [1/2,1]$
			we obtain
			$$ \frac{\sqrt{ v}}{\sqrt{s-t_k}}\exp\left(-\frac{{v}\left(1-\kappa  \left(s-t_k\right)\right)^2}{2\sigma^2\left(s-t_k\right)}\right) \leq \sqrt{8} \sigma c_{\delta}   \exp\left(-\frac{{v}\left(1-\kappa  \left(s-t_k\right) \right)^2}{2\sigma^2\left(s-t_k\right)(1+\delta)}\right).$$

			It follows
			\begin{align*}
				\mathbb{E}\left[L_t^0\left(Z\right)-L_{t_k}^0\left(Z\right)|\cF_{t_k}\right]&\le \sigma\int_{t_k}^{t}\frac{\sqrt{\hat{v}_{t_k}}}{\sqrt{2\pi \left(s-t_k\right)}}\exp\left(-\frac{ \hat{v}_{t_k}\left(1-\kappa (s-t_k)\right)^2}{2\sigma^2\left(s-t_k\right)}\right)ds\\
				&\le c_{\delta} \frac{2 \sigma^2}{\sqrt{\pi}} \int_{t_k}^{t}\exp\left(-\frac{\hat{v}_{t_k}\left(1-\kappa (s-t_k) \right)^2}{2\sigma^2(s-t_k)(1+\delta)}\right)ds\\
				&\le c_{\delta}   \frac{2 \sigma^2}{\sqrt{\pi}}  \int_{t_k}^{t} \exp\left(-\frac{\hat{v}_{t_k}\left(1-\kappa \Delta t\right)^2}{2\sigma^2\Delta t(1+\delta)}\right)ds.
			\end{align*}
			Now,  the Lyapunov inequality and Proposition \ref{lemma3.6} yield
			\begin{align*}
				\mathbb{E}\left[L_t^0\left(Z\right)-L_{t_k}^0\left(Z\right)\right]&\le C_{\delta} \int_{t_k}^{t}\mathbb{E}\left[\exp\left(-\frac{\hat{v}_{t_k}\left(1-\kappa \Delta t\right)^2}{2\sigma^2\Delta t(1+\delta)}\right)\right]ds\\
				&=C_{\delta} \int_{t_k}^{t}\mathbb{E}\left[\exp\left(-\frac{\hat{v}_{t_k}\left(1-\kappa \Delta t\right)^2}{2\sigma^2\Delta t}\right)^{\frac{1}{1+\delta}}\right]ds\\
				&\le C_{\delta} \int_{t_k}^{t}\left( \mathbb{E}\left[\exp\left(-\frac{\hat{v}_{t_k}\left(1-\kappa \Delta t\right)^2}{2\sigma^2\Delta t}\right)\right] \right)^{\frac{1}{1+\delta}}ds\\
				&\le C_\delta\Delta t \left(\frac{\Delta t}{\varepsilon}\right)^{\nu\frac{1-\varepsilon}{1+\delta}}.
			\end{align*}
			(ii) For the second statement note first that
				\begin{align*}
					& 	\mathbb{E}\left[\left| L^0_{t}(Z)-L^0_{\eta(t)}(Z)\right|^{1+\beta}\right]^{\frac{1}{1+\beta}} \\ & \qquad \qquad =\mathbb{E}\left[\left(L^0_{t}(Z)-L^0_{\eta(t)}(Z)\right)^{\frac{1}{1+\beta}}\left(L^0_{t}(Z)-L^0_{\eta(t)}(Z)\right)^{\beta+1-\frac{1}{1+\beta}}\right]^{\frac{1}{1+\beta}}\\
					& \qquad \qquad \le \left( \mathbb{E}\left[L^0_{t}(Z)-L^0_{\eta(t)}(Z)\right]\right)^{\frac{1}{(1+\beta)^2}} \left( \mathbb{E}\left[\left(L^0_{t}(Z)-L^0_{\eta(t)}(Z)\right)^{\frac{(\beta+1)^2-1}{\beta}}\right] \right)^{\frac{\beta}{(1+\beta)^2}}
				\end{align*} by H\"older's inequality.
			Now, consider first $Z=Z^{sym}$ and note that
			\begin{align*}
				&\mathbb{E}\left[\left|L_t^0\left(Z\right)-L_{\eta(t)}^0\left(Z\right)\right|^p\right]\\
				& \quad = \mathbb{E}\left[\left|\hat{v}_t-\hat{v}_{\eta(t)}-\int_{\eta(t)}^{t}\sign\left(Z_s\right)\kappa\left(\theta-\hat{v}_{\eta(s)}\right)ds-\sigma\int_{\eta(t)}^{t}\sign\left(Z_s\right)\sqrt{\hat{v}_{\eta(s)}}dW_s\right|^p\right]\\
				&  \quad \leq    3^{p-1}\left(\mathbb{E}\left| \hat{v}_t - \hat{v}_{\eta(t)} \right|^p+\mathbb{E}\left[\left|\int_{\eta(t)}^{t}\sign\left(Z_s\right)\kappa\left(\theta-\hat{v}_{\eta(s)}\right)ds\right|^p\right]\right.\\
				& \qquad \qquad  \quad \qquad \qquad  \qquad \left. \, + \, \mathbb{E}\left[\left|\sigma\int_{\eta(t)}^{t} \sign\left(Z_s\right)\sqrt{\hat{v}_{\eta(s)}}dW_s\right|^p\right]\right)
			\end{align*}
			since $|x+y+z|^p\leq 3^{p-1}(|x|^p+|y|^p+|z|^p)$ for $ x,y,z \in \mathbb{R}$, $p \geq 1$.
			We can conclude from Lemma \ref{localsym}, the H\"older inequality, the Burkholder-Davis-Gundy inequality and Lemma  \ref{vbounded} that $$ \sup_{t \in [0,T]} \mathbb{E}\left[\left|L_t^0\left(Z\right)-L_{\eta(t)}^0\left(Z\right)\right|^p\right] < \infty .$$ The case $Z=Z^{abs}$ can be done analogously. Applying the estimate from the first part, we obtain
				\begin{align*}
					\mathbb{E}\left[\left| L^0_{t}(Z)-L^0_{\eta(t)}(Z)\right|^{1+\beta}\right]^{\frac{1}{1+\beta}}&\le C_\beta \left( \mathbb{E}\left[L^0_{t}(Z)-L^0_{\eta(t)}(Z)\right]\right)^{\frac{1}{(1+\beta)^2}}\\
					&\le C_{\beta,\delta}\left(\Delta t\right)^{\frac{1}{(1+\beta)^2}} \left(\frac{\Delta t}{\varepsilon}\right)^{\nu\frac{1-\varepsilon}{(1+\delta)(1+\beta)^2}}.
				\end{align*}
		\end{proof}
	\end{proposition}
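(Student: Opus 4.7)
The natural tool here is the occupation time formula for semimartingales. On each subinterval $[t_k,t_{k+1}]$ we have $d\langle Z^\star\rangle_s = \sigma^2 \hat v^\star_{t_k}\,ds$, and, conditionally on $\hat v^\star_{t_k}=v>0$, the random variable $Z^\star_s$ is Gaussian with mean $v+\kappa(\theta-v)(s-t_k)$ and variance $\sigma^2 v(s-t_k)$. Testing the occupation formula $\int_{\mathbb{R}} \phi(z)(L_t^z(Z^\star)-L_{t_k}^z(Z^\star))\,dz = \sigma^2 \hat v^\star_{t_k}\int_{t_k}^{t}\phi(Z^\star_s)\,ds$ against arbitrary non-negative Borel $\phi$, taking conditional expectations and applying Fubini on the right-hand side produces an explicit Gaussian-kernel representation of $\mathbb{E}[L_t^z(Z^\star)-L_{t_k}^z(Z^\star)\mid \hat v^\star_{t_k}=v]$. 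Setting $z=0$ yields an integral whose integrand features $\sqrt{v/(s-t_k)}\exp\bigl(-(v+\kappa(\theta-v)(s-t_k))^2/(2\sigma^2 v(s-t_k))\bigr)$.

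To align this with Proposition~\ref{lemma3.6}, I would first discard the positive $\kappa\theta(s-t_k)$ inside the square, which only enlarges the exponent in absolute value and thus gives an upper bound, and then absorb the awkward $\sqrt{v/(s-t_k)}$ prefactor using the elementary inequality $\sqrt{y}e^{-y}\le c_\delta e^{-y/(1+\delta)}$ applied with $y=v(1-\kappa(s-t_k))^2/(2\sigma^2(s-t_k))$. The hypothesis $\Delta t\le 1/(2\kappa)$ guarantees $1-\kappa(s-t_k)\in[1/2,1]$, making the leftover multiplicative factor bounded; and since $u\mapsto (1-\kappa u)^2/u$ is decreasing on $(0,1/\kappa)$, replacing $s-t_k$ by $\Delta t$ inside the exponent preserves the direction of the inequality. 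The integrand is then controlled by a constant multiple of $\exp\bigl(-\hat v^\star_{t_k}(1-\kappa\Delta t)^2/(2\sigma^2\Delta t(1+\delta))\bigr)$. Taking expectations, pulling the exponent $1/(1+\delta)$ outside by Jensen's (equivalently Lyapunov's) inequality, and invoking Proposition~\ref{lemma3.6} with the chosen $\varepsilon$ delivers the claimed bound in~(i).

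For part~(ii), the plan is interpolation between part~(i) and a uniform higher-moment bound. Writing $|L_t^0-L_{\eta(t)}^0|^{1+\beta}$ as the product $(L_t^0-L_{\eta(t)}^0)^{1/(1+\beta)}\cdot(L_t^0-L_{\eta(t)}^0)^{(1+\beta)-1/(1+\beta)}$ and applying H\"older's inequality with conjugate exponents $1+\beta$ and $(1+\beta)/\beta$ gives $\mathbb{E}[|L_t^0-L_{\eta(t)}^0|^{1+\beta}]^{1/(1+\beta)} \le (\mathbb{E}[L_t^0-L_{\eta(t)}^0])^{1/(1+\beta)^2}\cdot(\mathbb{E}[(L_t^0-L_{\eta(t)}^0)^{p_*}])^{\beta/(1+\beta)^2}$ with $p_*=((1+\beta)^2-1)/\beta$. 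The first factor is bounded by part~(i); the second is bounded uniformly in $t$ and $N$ by reading the local-time increment off the Tanaka-Meyer identity~\eqref{itosym} (and its absorbed analogue) and controlling each of the four resulting terms via Lemma~\ref{vbounded}, Lemma~\ref{localsym}, H\"older's inequality and the Burkholder-Davis-Gundy inequality.

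The main obstacle is the stray $\sqrt{v}$ factor in the Gaussian kernel, which stands in the way of a direct application of Proposition~\ref{lemma3.6}: the $\sqrt{y}e^{-y}$ trick is the crucial maneuver that removes it, at the unavoidable cost of weakening the exponent by the factor $1/(1+\delta)$. The technical restriction $\Delta t\le 1/(2\kappa)$ is tailored precisely to keep $1-\kappa(s-t_k)$ bounded away from zero and to make the replacement of $s-t_k$ by $\Delta t$ in the exponent go in the right direction; conceptually this is the only spot in the argument where the step size enters beyond what is already needed for Proposition~\ref{lemma3.6}.
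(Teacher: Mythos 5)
Your proof matches the paper's argument in all essentials: the same occupation-time/Gaussian-kernel computation, the same maneuver of dropping $\kappa\theta(s-t_k)$ and applying $\sqrt{y}e^{-y}\le c_\delta e^{-y/(1+\delta)}$ to absorb the $\sqrt{v/(s-t_k)}$ prefactor, the same monotonicity argument to replace $s-t_k$ by $\Delta t$, the same application of Lyapunov's inequality together with Proposition~\ref{lemma3.6}, and, for part~(ii), the identical H\"older interpolation with exponents $1+\beta$ and $(1+\beta)/\beta$ followed by the Tanaka--Meyer decomposition and Lemmata~\ref{vbounded}, \ref{localsym} plus BDG to bound the higher moment uniformly. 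This is the paper's proof.
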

	
	\smallskip
	\section{Proof of Theorem \ref{maintheorem}}
	Now, we have enough tools to prove our main statement. 
	Since $\EE\left[u(T,x_N^{\star},v_N^{\star})\right]= \EE \left[f(x_N^{\star},v_N^{\star})\right]$ and $u(0,x_0,v_0)= \EE \left[f(X_T,V_T)\right]$
	the weak error is a telescoping sum of local errors:
	\begin{align*}
		\left|\mathbb{E}\left[f(x_N^{\star},v_N^{\star})\right]-\mathbb{E}\left[f(X_T,V_T)\right]\right|=\left|\sum_{n=1}^{N}\mathbb{E}\left[u(t_n,x_n^{\star},v_n^{\star})-u(t_{n-1},x_{n-1}^{\star},v_{n-1}^{\star})\right]\right|.
	\end{align*}
	Since $\hat{v}_t^{abs}$ can be zero with positive probability on a non-empty time interval, technical difficulties with a direct application of the It\=o-formula to $u$ at $v=0$, i.e. at the boundary of the state space, arise.
	Therefore, 
	we will analyze first
	$$  \left|\sum_{n=1}^{N}\mathbb{E}\left[u(t_n,x_n^{\star},v_n^{\star}+\gamma)-u(t_{n-1},x_{n-1}^{\star},v_{n-1}^{\star}+\gamma)\right]\right|$$
	with $\gamma >0$ and in a second step exploit that
	\begin{align}\label{gamma_reg}
		|\EE \left[f(x_N,v_N)- f(X_T,V_T)\right]|=\lim_{\gamma \searrow 0}
		\left|\sum_{n=1}^{N}\mathbb{E}\left[u(t_n,x_n^{\star},v_n^{\star}+\gamma)-u(t_{n-1},x_{n-1}^{\star},v_{n-1}^{\star}+\gamma)\right]\right|. \end{align}
	This regularization is not required for the symmetrized Euler scheme, but to present both proofs in a concise way, we use it for both schemes.
	Equation \eqref{gamma_reg} follows e.g. from the smoothness of the Kolmogorov PDE, i.e. Proposition \ref{propbriani}, and the moment bounds for the discretization schemes, i.e. Lemma \ref{vbounded} and \ref{xbounded}.
	
	\smallskip
	
	Recall that the Kolmogorov PDE is given by
	$$\begin{aligned}
		u_t(t,x,v) =&\text{ }\mathcal{L}(u)(t,x,v)\text{, } \quad t\in(0,T),x\in\R,v>0,\\
		u(T,x,v) =&\text{ } f(x,v) \text{, } \quad \qquad  x\in\R, v\ge0,
	\end{aligned}$$
	with
	$$ \mathcal{L}(u)(t,x,v)= \frac{v}{2} u_x(t,x,v)-\kappa(\theta-v)u_v(t,x,v)
	-\frac{v}{2}\left(u_{xx}(t,x,v)+2\rho\sigma u_{xv}(t,x,v)+\sigma^2u_{vv}(t,x,v)\right).
	$$ 
	For $\gamma \geq 0$, define  $u^{\gamma}(t,x,v)=u(t,x,v+\gamma)$. A direct application of Proposition \ref{propbriani} gives:
	\begin{lemma}\label{lembriani}
		Let $f\in C^{6}_{pol}(\R \times [0,\infty))$. Then, there exist $C_f>0$ and $a>0$ such that
		\begin{align*}
			\sup_{\gamma \in [0,1]}	\sup\limits_{t \in [0,T)}|\partial^l_x \partial^m_v u^{\gamma}(t,x,v)|\le C_f(1+|x|^a + |1+v|^a ), \text{ }\text{ }x \in \mathbb{R}, v > 0,
		\end{align*} if  $l+m\le 3$.
	\end{lemma}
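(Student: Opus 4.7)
The plan is to derive this lemma as an almost immediate corollary of Proposition \ref{propbriani}, combined with an elementary manipulation of polynomial bounds to absorb the shift by $\gamma$.

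First I would apply Proposition \ref{propbriani} with $q=3$. Since by hypothesis $f\in C^{6}_{pol}(\R \times [0,\infty)) = C^{2q}_{pol}(\R \times [0,\infty))$, this yields $u\in C^{3}_{pol,T}(\R \times [0,\infty))$. Unpacking the definition of $C^{3}_{pol,T}$ given before Proposition \ref{propbriani}, there exist constants $c,a>0$ such that
\begin{align*}
\sup_{t<T}|\partial^k_t \partial^l_x \partial^m_v u(t,x,v)| \le c(1+|(x,v)|^a), \qquad x\in \R, \, v\ge 0,
\end{align*}
for all non-negative integers $k,l,m$ with $2k+l+m\le 3$. In particular, setting $k=0$ gives the required control on $\partial^l_x\partial^m_v u$ for $l+m\le 3$.

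Next I would translate this bound to $u^{\gamma}$. By definition $u^{\gamma}(t,x,v)=u(t,x,v+\gamma)$, so $\partial^l_x\partial^m_v u^{\gamma}(t,x,v)=(\partial^l_x\partial^m_v u)(t,x,v+\gamma)$, and therefore
\begin{align*}
|\partial^l_x\partial^m_v u^{\gamma}(t,x,v)| \le c\bigl(1+|(x,v+\gamma)|^a\bigr), \qquad x\in\R, \, v>0, \, \gamma\in[0,1].
\end{align*}
Finally, using $|(x,v+\gamma)|^a \le (|x|+v+\gamma)^a \le (|x|+1+v)^a \le 3^{a-1}(1+|x|^a+|1+v|^a)$ (with the convention $3^{a-1}=1$ when $a\le 1$), and taking the supremum over $\gamma\in[0,1]$ and $t\in[0,T)$, one obtains the claimed inequality with $C_f=c\cdot(1+3^{a-1})$, which depends on $f$ through $c$ and $a$.

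There is no real obstacle here: the heavy lifting is contained in Proposition \ref{propbriani}, and the only thing the present lemma adds is the observation that a translation by a bounded parameter $\gamma\in[0,1]$ affects the polynomial majorant only up to a constant. The slightly unusual form of the right-hand side, in which $|x|^a$ and $|1+v|^a$ appear separately rather than as a single Euclidean norm, is convenient for later arguments but is equivalent (up to a constant) to a bound of the form $C_f(1+|(x,v)|^a)$ for $v\ge 0$.
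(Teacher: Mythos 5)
Your proof is correct and matches the paper's intent exactly: the paper offers no separate argument and simply states that Lemma \ref{lembriani} is ``a direct application of Proposition \ref{propbriani},'' which is precisely what you carry out (take $q=3$, read off the $k=0$ case of the $C^3_{pol,T}$ bound, and absorb the shift by $\gamma\in[0,1]$ into the polynomial constant).
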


	Moreover,  note that the function $u^{\gamma}$ satisfies by construction the PDE
	$$\begin{aligned}
		u_t^{\gamma}(t,x,v) =&\text{ }  \mathcal{L}(u^{\gamma})(t,x,v) + \frac{\gamma}{2} \mathcal{R}(u^{\gamma})(t,x,v) , \\
		u^{\gamma}(T,x,v) =&\text{ } f(x,v+\gamma) ,
	\end{aligned}$$
	where 
	\begin{align*}
		\mathcal{R}(u^{\gamma})(t,x,v)=u_x^{\gamma}(t,x,v)+2\kappa  u_v^{\gamma}(t,x,v)  - u_{xx}^{\gamma}(t,x,v)-2\rho\sigma u_{xv}^{\gamma}(t,x,v)-\sigma^2u_{vv}^{\gamma}(t,x,v). 
	\end{align*}
	Lemma \ref{lembriani} yields
	\begin{align}
		\label{bound_rem_pde}
		& \sup_{\gamma \in [0,1]}	\sup\limits_{t \in [0,T)}|\mathcal{R}(u^{\gamma})(t,x,v)|\le C_{f}(1+|x|^a + |1+v|^a ), \text{ }\text{ }x \in \mathbb{R}, v > 0, \\ & \label{bound_rem_pde-2}
		\sup_{\gamma \in [0,1]}	\sup\limits_{t \in [0,T)} \left| \frac{\partial}{\partial x} \mathcal{R}(u^{\gamma})(t,x,v) \right|\le C_{f}(1+|x|^a + |1+v|^a ), \text{ }\text{ }x \in \mathbb{R}, v > 0, \\ & \label{bound_rem_pde-3}
		\sup_{\gamma \in [0,1]}	\sup\limits_{t \in [0,T)} \left| \frac{\partial}{\partial v} \mathcal{R}(u^{\gamma})(t,x,v) \right|\le C_{f}(1+|x|^a + |1+v|^a ), \text{ }\text{ }x \in \mathbb{R}, v > 0.
	\end{align}

	We will start with the symmetrized Euler scheme and we will see then that the same procedure can be directly applied to the absorbed Euler scheme.
	
	\subsection{The symmetrized Euler scheme}\label{proofsym}
	We again drop  the $sym$-label to simplify the notation. After the previous preparations, we now apply the It\=o formula with $\gamma \in (0,1]$ to the summands of the telescoping sum. Using \eqref{disc-heston} and \eqref{itosym}
	we have
	\begin{align*}
		e_n^{\gamma} :=&\text{ }\mathbb{E}\left[u^{\gamma}(t_{n+1},x_{n+1},v_{n+1})-u^{\gamma} (t_{n},x_{n},v_n)\right]\\
		=&\int_{t_n}^{t_{n+1}}\mathbb{E}\left[u_t^{\gamma}(t,\hat{x}_t,\hat{v}_t)-\frac{1}{2}\hat{v}_{\eta(t)}u_x^{\gamma}(t,\hat{x}_t,\hat{v}_t)+\kappa(\theta-\hat{v}_{\eta(t)})\sign\left(Z_t\right)u_v^{\gamma}(t,\hat{x}_t,\hat{v}_t)\right.\\
		&\left. \qquad \qquad +\frac{1}{2}\hat{v}_{\eta(t)}u_{xx}^{\gamma}(t,\hat{x}_t,\hat{v}_t)+\rho\sigma \hat{v}_{\eta(t)} \sign\left(Z_t\right)u_{xv}^{\gamma}(t,\hat{x}_t,\hat{v}_t)+\frac{1}{2}\sigma^2 \hat{v}_{\eta(t)}u_{vv}^{\gamma}(t,\hat{x}_t,\hat{v}_t) \right]dt\\
		& \quad + \mathbb{E}\left[\int_{t_n}^{t_{n+1}}u_v^{\gamma}(t,\hat{x}_t,\hat{v}_t)dL_t^0(Z)\right] \\
		=&\int_{t_n}^{t_{n+1}}\mathbb{E}\left[u_t^{\gamma}(t,\hat{x}_t,\hat{v}_t)-\frac{1}{2}\hat{v}_{\eta(t)}u_x^{\gamma}(t,\hat{x}_t,\hat{v}_t)+\kappa(\theta-\hat{v}_{\eta(t)}) u_v^{\gamma}(t,\hat{x}_t,\hat{v}_t)\right.\\
		&\left. \qquad \qquad +\frac{1}{2}\hat{v}_{\eta(t)}u_{xx}^{\gamma}(t,\hat{x}_t,\hat{v}_t)+\rho\sigma\hat{v}_{\eta(t)}u_{xv}^{\gamma}(t,\hat{x}_t,\hat{v}_t)+\frac{1}{2} \sigma^2 \hat{v}_{\eta(t)}u_{vv}^{\gamma}(t,\hat{x}_t,\hat{v}_t)\right]dt\\
		& \quad  - 2  \int_{t_n}^{t_{n+1}} \mathbb{E}\left[ \mathbbm{1}_{\{Z_t\le0\}} \left( \kappa(\theta-\hat{v}_{\eta(t)})u_v^{\gamma}(t,\hat{x}_t,\hat{v}_t) + \rho\sigma \hat{v}_{\eta(t)}u_{xv}^{\gamma}(t,\hat{x}_t,\hat{v}_t)  \right)  \right] dt  \\
		& \quad + \mathbb{E}\left[\int_{t_n}^{t_{n+1}}u_v^{\gamma}(t,\hat{x}_t,\hat{v}_t)dL_t^0(Z)\right].
	\end{align*}
	Note that $t \mapsto L_t^0(Z)$ is pathwise increasing and that $\int_{t_n}^{t_{n+1}}u_v^{\gamma}(t,\hat{x}_t,\hat{v}_t)dL_t^0(Z)$ is a pathwise Riemann-Stieltjes integral.
	
	Since 
	$$ u_t^{\gamma}(t,x,v) =\text{ }  \mathcal{L}(u^{\gamma})(t,x,v) + \frac{\gamma}{2} \mathcal{R}(u^{\gamma})(t,x,v)$$
	with
	$$ \mathcal{L}(u^{\gamma})(t,x,v)= \frac{v}{2} u^{\gamma}_x(t,x,v)-\kappa(\theta-v)u^{\gamma}_v(t,x,v)
	-\frac{v}{2}\left(u_{xx}^{\gamma}(t,x,v)+2\rho\sigma u_{xv}^{\gamma}(t,x,v)+\sigma^2u_{vv}^{\gamma}(t,x,v)\right)
	$$ 
	and
	\begin{align*}
		\mathcal{R}(u^{\gamma})(t,x,v)=u_x^{\gamma}(t,x,v)+2 \kappa  u_v^{\gamma}(t,x,v)  - u_{xx}^{\gamma}(t,x,v)-2\rho\sigma u_{xv}^{\gamma}(t,x,v)-\sigma^2u_{vv}^{\gamma}(t,x,v)
	\end{align*}
	we can write
	\begin{align*}
		e_n^{\gamma} =&\int_{t_n}^{t_{n+1}}\mathbb{E}\left[ \frac{\hat{v}_t-\hat{v}_{\eta(t)}}{2} \left(u^{\gamma}_x(t,\hat{x}_t,\hat{v}_t)+ 2 \kappa u^{\gamma}_v(t,\hat{x}_t,\hat{v}_t) - u_{xx}^{\gamma}(t,\hat{x}_t,\hat{v}_t)- 2 \rho\sigma u_{xv}^{\gamma}(t,\hat{x}_t,\hat{v}_t)-\sigma^2 u^{\gamma}_{vv}(t,\hat{x}_t,\hat{v}_t) \right)\right]dt\\
		\quad& +\frac{\gamma}{2}\int_{t_n}^{t_{n+1}}
		\mathbb{E}\left[\mathcal{R}(u^{\gamma})(t,\hat{x}_t,\hat{v}_t)\right]dt	
		\\ \quad &- 2   \int_{t_n}^{t_{n+1}} \mathbb{E}\left[ \mathbbm{1}_{\{Z_t\le0\}} \left( \kappa(\theta-\hat{v}_{\eta(t)})u_v^{\gamma}(t,\hat{x}_t,\hat{v}_t) + \rho\sigma \hat{v}_{\eta(t)}u_{xv}^{\gamma}(t,\hat{x}_t,\hat{v}_t)  \right)  \right] dt
		\\ \quad &+\mathbb{E}\left[\int_{t_n}^{t_{n+1}}u_v^{\gamma}(t,\hat{x}_t,\hat{v}_t)dL_t^0(Z)\right] \\ 
		=&\quad e^{(1,\gamma)}_n+  e^{(2,\gamma)}_n+ e^{(3,\gamma)}_n+ e^{(4,\gamma)}_n
	\end{align*}
	with
	\begin{align*}
		e^{(1,\gamma)}_n&:=\mathbb{E}\left[\int_{t_n}^{t_{n+1}}u^{\gamma}_v(t,\hat{x}_t,\hat{v}_t)dL_t^0(Z)\right],\\
		e^{(2,\gamma)}_n&:=-2\int_{t_n}^{t_{n+1}}\mathbb{E}\left[\mathbbm{1}_{\{Z_t\le0\}} \left( \kappa(\theta-\hat{v}_{\eta(t)})u^{\gamma}_v(t,\hat{x}_t,\hat{v}_t)+ \rho\sigma \hat{v}_{\eta(t)}u_{xv}^{\gamma}(t,\hat{x}_t,\hat{v}_t)\right)\right]dt,\\
		e^{(3,\gamma)}_n&:= \frac{1}{2}\int_{t_n}^{t_{n+1}}\mathbb{E}\left[(\hat{v}_t-\hat{v}_{\eta(t)})\mathcal{R}(u^{\gamma})(t,\hat{x}_t,\hat{v}_t)\right]dt ,\\
		e^{(4,\gamma)}_n&:= \frac{\gamma}{2} \int_{t_n}^{t_{n+1}}\mathbb{E}\left[\mathcal{R}(u^{\gamma})(t,\hat{x}_t,\hat{v}_t)\right]dt.
	\end{align*}
	
	\subsubsection{The first term}
	Recall that $\int_{t_n}^{t_{n+1}}u_v^{\gamma}(t,\hat{x}_t,\hat{v}_t)dL_t^0(Z)$ is a pathwise Riemann-Stieltjes integral  and $L^0(Z)$ is pathwise increasing. Therefore we have
	$$ \left| \int_{t_n}^{t_{n+1}}u_v^{\gamma}(t,\hat{x}_t,\hat{v}_t)dL_t^0(Z) \right| \leq  \sup\limits_{t\in[t_n,t_{n+1}]}|u_v^{\gamma}(t,\hat{x}_t,\hat{v}_t)| \left(L^0_{t_{n+1}}(Z)-L^0_{t_{n}}(Z)\right).$$
	With Lemma \ref{lembriani} it follows
	\begin{align*} \left| \int_{t_n}^{t_{n+1}}u_v^{\gamma}(t,\hat{x}_t,\hat{v}_t)dL_t^0(Z) \right| & \leq  C_f \sup\limits_{t\in[0,T]}
		\left(1+|\hat{x}_t|^a+|1+\hat{v}_t|^a\right) \left(L^0_{t_{n+1}}(Z)-L^0_{t_{n}}(Z)\right).
	\end{align*}
	The  Lemmata \ref{vbounded} and \ref{xbounded} yield the existence of  a constant $C_{f,p}>0$ such that
	\begin{align*}
		\left( \mathbb{E}\left|\sup\limits_{t\in[0,T]}\left(1+|\hat{x}_t|^a+|1+\hat{v}_t|^a\right)\right|^p\right)^{1/p} \leq C_{f,p},
	\end{align*}
	and H\"older's inequality then gives
	$$ |e^{(1,\gamma)}_n| \leq C_{f,\beta} 	\left( \mathbb{E}\left| L^0_{t_{n+1}}(Z)-L^0_{t_{n}}(Z)\right|^{1+\beta} \right)^{ \frac{1}{1+\beta}}$$
	for $\beta >0$.
	With Proposition \ref{proplocaltime}, we can therefore conclude that
		\begin{align}\label{est-1}
			|e_n^{(1,\gamma)}| \leq C_{f,\beta,\delta} \left(\Delta t\right)^{\frac{1}{(1+\beta)^2}} \left(\frac{\Delta t}{\varepsilon}\right)^{\nu\frac{1-\varepsilon}{(1+\delta)(1+\beta)^2}},
		\end{align} 
		uniformly in $\gamma \in (0,1]$.

	\subsubsection{The second term}
	Recall that
	\begin{align*}
		e^{(2,\gamma)}_n&=-2\int_{t_n}^{t_{n+1}}\mathbb{E}\left[\mathbbm{1}_{\{Z_t\le0\}} \left( \kappa(\theta-\hat{v}_{\eta(t)})u_v^{\gamma}(t,\hat{x}_t,\hat{v}_t)+ \rho\sigma \hat{v}_{\eta(t)}u_{xv}^{\gamma}(t,\hat{x}_t,\hat{v}_t)\right)\right]dt.
	\end{align*}
	An application of H\"older's inequality yields
	\begin{align*}
		|e^{(2,\gamma)}_n| \leq 2\int_{t_n}^{t_{n+1}}\left( P (Z_t\le0)  \right)^{\frac{1}{1+\beta}}
		\left( \mathbb{E} \left[ \left| \kappa(\theta-\hat{v}_{\eta(t)})u_v^{\gamma}(t,\hat{x}_t,\hat{v}_t)+ \rho\sigma \hat{v}_{\eta(t)}u_{xv}^{\gamma}(t,\hat{x}_t,\hat{v}_t)\right|^{\frac{1+\beta}{\beta}} \right] \right)^{\frac{\beta}{1+\beta}} dt.
	\end{align*}
	Lemma \ref{lembriani} and the Lemmata \ref{vbounded} and \ref{xbounded}  give that
	\begin{align*}
		2 \left( \mathbb{E} \left[ \left|  \kappa(\theta-\hat{v}_{\eta(t)})u_v^{{\gamma}}(t,\hat{x}_t,\hat{v}_t)+ \rho\sigma \hat{v}_{\eta(t)}u_{xv}^{{\gamma}}(t,\hat{x}_t,\hat{v}_t)\right|^{\frac{1+\beta}{\beta}} \right] \right)^{\frac{\beta}{1+\beta}} \leq C_{f,\beta}
	\end{align*} for $\beta>0$. Since 
	$$
	P(Z_t \leq 0) \leq  c\left(\frac{\Delta t}{\varepsilon}\right)^{\nu(1-\varepsilon)}, \qquad t \in (t_n,t_{n+1}),
	$$  by Proposition \ref{lemma3.6}, we end up with
	\begin{align} \label{est-2} |e_n^{(2,\gamma)}| \leq C_{f,\beta}  \Delta t\left(\frac{\Delta t}{\varepsilon}\right)^{\nu\frac{1-\varepsilon}{1+\beta}},
	\end{align}
	uniformly in $\gamma \in (0,1]$.
	
	\subsubsection{The third term}
	Now, we consider
	$$ e^{(3,\gamma)}_n= \frac{1}{2} \int_{t_n}^{t_{n+1}}\mathbb{E}\left[(\hat{v}_t-\hat{v}_{\eta(t)})\mathcal{R}(u^{\gamma})(t,\hat{x}_t,\hat{v}_t)\right]dt.$$
	Due to our assumptions the function $k^{\gamma}:=\mathcal{R}(u^{\gamma})$ belongs to  $C^1_{pol,T}$.
	Using the expression for $\hat{v}_t$ from Equation \eqref{itosym} we have
	\begin{align*}
		&\int_{t_n}^{t_{n+1}}\mathbb{E}\left[(\hat{v}_t-\hat{v}_{\eta(t)})k^{\gamma}(t,\hat{x}_t,\hat{v}_t)\right]dt\\
		& \quad = \int_{t_n}^{t_{n+1}}\mathbb{E}\left[\left(\int_{\eta(t)}^{t}\sign\left(Z_s\right)\kappa(\theta-\hat{v}_{\eta(s)})ds+\sigma\int_{\eta(t)}^{t}\sign\left(Z_s\right)\sqrt{\hat{v}_{\eta(s)}}dW_s\right)k^{\gamma}(t,\hat{x}_t,\hat{v}_t)\right]dt\\
		& \qquad +\int_{t_n}^{t_{n+1}}\mathbb{E}\left[\left(L_t^0\left(\hat{v}\right)-L_{\eta(t)}^0\left(\hat{v}\right)\right)k^{\gamma}(t,\hat{x}_t,\hat{v}_t)\right]dt.
	\end{align*}

	Looking at the first term, we have using H\"older's inequality, Equation \eqref{bound_rem_pde} and the Lemmata  \ref{vbounded}, \ref{xbounded} that
	\begin{align} \label{est3-1}
		\left| \mathbb{E}\left[\left(\int_{\eta(t)}^{t}\sign\left(Z_s\right) \kappa(\theta-\hat{v}_{\eta(s)})ds\right)k^{\gamma}(t,\hat{x}_t,\hat{v}_t)\right]\right|& \leq C_f \Delta t.
	\end{align}

	By an application of the law of total expectation, the H\"older and the Minkowski inequalities we have
	\begin{align*}
		&\left|\mathbb{E}\left[\int_{\eta(t)}^{t} \sign\left(Z_s\right) \sqrt{\hat{v}_{\eta(s)}}dW_sk^{\gamma}(t,\hat{x}_t,\hat{v}_t)\right]\right|\\ & \quad =\left|\mathbb{E}\left[\int_{\eta(t)}^{t} \sign\left(Z_s\right) \sqrt{\hat{v}_{\eta(s)}}dW_s \left( k^{\gamma}(t,\hat{x}_t,\hat{v}_t)-k^{\gamma}(t,\hat{x}_{\eta(t)},\hat{v}_{\eta(t)})\right)\right] \right|\\ 
		& \quad \le \mathbb{E}\left[\left|\int_{\eta(t)}^{t} \sign\left(Z_s\right)  \sqrt{\hat{v}_{\eta(s)}}dW_s\right|^2\right]^{\frac{1}{2}}  
		\\& \qquad \qquad \times \left[  \left( 
		\mathbb{E}\left| (k^{\gamma}(t,\hat{x}_t,\hat{v}_t)-k^{\gamma}(t,\hat{x}_{\eta(t)},\hat{v}_t) \right|^2  \right)^{1/2} 
		+   \left( 
		\mathbb{E}\left| (k^{\gamma}(t,\hat{x}_{\eta(t)},\hat{v}_t)-k^{\gamma}(t,\hat{x}_{\eta(t)},\hat{v}_{\eta(t)} \right|^2  \right)^{1/2} 
		\right].
	\end{align*}
	The mean value theorem now gives
	$$ k^{\gamma}(t,\hat{x}_t,\hat{v}_t)-k^{\gamma}(t,\hat{x}_{\eta(t)},\hat{v}_t)= \left( \int_0^1 k^{\gamma}_x(t, \lambda \hat{x}_t +(1-\lambda) \hat{x}_{\eta(t)},\hat{v}_t) d \lambda \right)   (\hat{x}_t-\hat{x}_{\eta(t)})
	$$
	and so
	\begin{align*}
		&   \left( \mathbb{E}\left|  k^{\gamma}(t,\hat{x}_t,\hat{v}_t)-k^{\gamma}(t,\hat{x}_{\eta(t)},\hat{v}_t)\right|^2 \right)^{1/2} \\ & \qquad \qquad \quad
		\leq \left( \int_0^1 \left( \mathbb{E}\left|   k^{\gamma}_x(t, \lambda \hat{x}_t +(1-\lambda) \hat{x}_{\eta(t)},\hat{v}_t) \right|^4 \right)^{1/4} d \lambda \right) \left(\mathbb{E}\left|  \hat{x}_t-\hat{x}_{\eta(t)} \right|^4 \right)^{1/4}.
	\end{align*}
	Equation \eqref{bound_rem_pde-2} and the Lemmata  \ref{vbounded}, \ref{xbounded} imply that
	$$  \sup_{t \in [0,T]}\int_0^1 \left( \mathbb{E}\left|   k^{\gamma}_x(t, \lambda \hat{x}_t +(1-\lambda) \hat{x}_{\eta(t)},\hat{v}_t) \right|^4 \right)^{1/4} d \lambda \leq C_f. $$
	Thus, we have by Lemma \ref{localxsymm}  that
	$$  \left( \mathbb{E}\left|  k^{\gamma}(t,\hat{x}_t,\hat{v}_t)-k^{\gamma}(t,\hat{x}_{\eta(t)},\hat{v}_t)\right|^2 \right)^{1/2} \leq C_f (\Delta t)^{1/2} .$$
	Similarly, we obtain
	$$  \left( \mathbb{E}\left|k^{\gamma}(t,\hat{x}_{\eta(t)},\hat{v}_t)-k^{\gamma}(t,\hat{x}_{\eta(t)},\hat{v}_{\eta(t)}) \right|^2  \right)^{1/2}  \leq C_{f} (\Delta t)^{1/2}.
	$$
	Since
	$$ \left[ \mathbb{E}\left|\int_{\eta(t)}^{t} \sign\left(Z_s\right)  \sqrt{\hat{v}_{\eta(s)}}dW_s\right|^2\right]^{\frac{1}{2}}  \leq C (\Delta t)^{1/2} $$
	by Lemma \ref{vbounded} and the It\=o-isometry, we end up with 
	\begin{align} \label{est3-2}\left| \mathbb{E}\left[\left(\sigma\int_{\eta(t)}^{t}\sign\left(Z_s\right) \sqrt{\hat{v}_{\eta(s)}}dW_s\right)k^{\gamma}(t,\hat{x}_t,\hat{v}_t)\right]\right| \leq C_f \Delta t.
	\end{align}
	
	With the H\"older inequality for some $\beta>0$, we have
	\begin{align*}
		& \left|\int_{t_n}^{t_{n+1}}\mathbb{E}\left[\left(L_t^0(Z)-L_{\eta(t)}^0(Z)\right)k^{\gamma}(t,\hat{x}_t,\hat{v}_t)\right]dt\right| \\ & \qquad \le \int_{t_n}^{t_{n+1}}
		\left( \mathbb{E}\left|L_t^0(Z)-L_{\eta(t)}^0(Z)\right|^{1+\beta} \right)^{\frac{1}{1+\beta}}
		\left(  \mathbb{E} |k^{\gamma}(t,\hat{x}_t,\hat{v}_t)|^{\frac{1+\beta}{\beta}} \right)^{\frac{\beta}{1+\beta}}dt.
	\end{align*}
	As before, we can show that there exists a constant
	$ C_{f,\beta}>0$,  such that
	$$ \sup_{t \in [0,T]} \left(  \mathbb{E} |k^{\gamma}(t,\hat{x}_t,\hat{v}_t)|^{\frac{1+\beta}{\beta}} \right)^{\frac{\beta}{1+\beta}} \leq C_{f,\beta}.$$
	Since
	$$ \mathbb{E}\left[\left|L_t^0(Z)-L_{\eta(t)}^0(Z)\right|^{1+\beta}\right]^{\frac{1}{1+\beta}} \leq  C_{\beta,\delta}
		\left(\Delta t\right)^{\frac{1}{(1+\beta)^2}} \left(\frac{\Delta t}{\varepsilon}\right)^{\nu\frac{1-\varepsilon}{(1+\delta)(1+\beta)^2}} , $$
	again by Proposition \ref{proplocaltime}, we obtain that
	\begin{align}\label{est3-3} \nonumber
		& \left|\int_{t_n}^{t_{n+1}}\mathbb{E}\left[\left(L_t^0(Z)-L_{\eta(t)}^0(Z)\right)k^{\gamma}(t,\hat{x}_t,\hat{v}_t)\right]dt\right| \\ & \qquad \le C_{f,\beta,\delta} \Delta t 
			\left(\Delta t\right)^{\frac{1}{(1+\beta)^2}} \left(\frac{\Delta t}{\varepsilon}\right)^{\nu\frac{1-\varepsilon}{(1+\delta)(1+\beta)^2}}
		.
	\end{align}

	Summarizing \eqref{est3-1}, \eqref{est3-2} and \eqref{est3-3} we have shown that
			\begin{align}
				|e_n^{(3,\gamma)}| \leq C_{f,\beta, \delta}  \Delta t \left(
				\Delta t   + 
				\left(\Delta t\right)^{\frac{1}{(1+\beta)^2}} \left(\frac{\Delta t}{\varepsilon}\right)^{\nu\frac{1-\varepsilon}{(1+\delta)(1+\beta)^2}}
				\right), 
	\end{align} 
	uniformly in $\gamma \in (0,1]$.
	
	\subsubsection{The fourth term}
	Finally, consider
	$$ e^{(4,\gamma)}_n= \frac{\gamma}{2} \int_{t_n}^{t_{n+1}}\mathbb{E}\left[\mathcal{R}(u^{\gamma})(t,\hat{x}_t,\hat{v}_t)\right]dt.$$
	Since
	$$ \sup_{t \in [0,T]} \mathbb{E}\left[\mathcal{R}(u^{\gamma})(t,\hat{x}_t,\hat{v}_t)\right] \leq C_{f}$$ 
	due to Equation \eqref{bound_rem_pde} and the Lemmata  \ref{vbounded}, \ref{xbounded}, 
	we have that
	\begin{align}
		\frac{1}{\gamma} |e^{(4,\gamma)}_n| \leq C_f \Delta t, \end{align}
	uniformly in $\gamma \in (0,1]$.
	
	\subsubsection{The conclusion}
			Recall that $\Delta t =T /N$. 
			Adding the estimates for term one up to term four we have derived that
			\begin{align*}
				|e_n^{\gamma} | & \leq C_{f,\beta, \delta}  	\left(\Delta t\right)^{\frac{1}{(1+\beta)^2}} \left(\frac{\Delta t}{\varepsilon}\right)^{\nu\frac{1-\varepsilon}{(1+\delta)(1+\beta)^2}}
				\\ & \qquad  +C_{f,\beta}  \Delta t
				\left( \frac{\Delta t}{\varepsilon} \right)^{\nu \frac{1-\varepsilon}{1+\beta}} 
				\\& \qquad +  C_{f,\beta, \delta}  \Delta t \left(
				\Delta t   + 
				\left(\Delta t\right)^{\frac{1}{(1+\beta)^2}} \left(\frac{\Delta t}{\varepsilon}\right)^{\nu\frac{1-\varepsilon}{(1+\delta)(1+\beta)^2}}
				\right)
				\\ &  \qquad + C_f \gamma \Delta t. 
			\end{align*}
			For any given $\epsilon \in( 0, 1/2)$ we now can find $\varepsilon \in (0,1/2]$, $\beta >0$ and $\delta >0$ such that
			$$ \frac{1}{(1+\beta)^2} + \nu\frac{1-\varepsilon}{(1+\delta)(1+\beta)^2} \geq 1+ \nu(1-\epsilon)$$
			and
			$$ 1+ \nu \frac{1-\varepsilon}{1+\beta}  \geq 1+ \nu(1-\epsilon).$$
			Consequently, we obtain
			\begin{align*}
				|e_n^{\gamma} | & \leq C_{f,\epsilon} \Delta t  	\left(  \Delta t + (\Delta t)^{ \nu(1-\epsilon)} \right)
				+ C_f \gamma \Delta t
			\end{align*}
			and
			\begin{align*}
				\sum_{n=0}^{N-1} |e_n^{\gamma}| \leq C_f \gamma +  C_{f,\epsilon} \left(  \Delta t + (\Delta t)^{ \nu(1-\epsilon)} \right). \end{align*}
			Since $$|\EE \left[f(x_N,v_N)- f(X_T,V_T)\right]| \leq \limsup_{\gamma \searrow 0}\sum_{n=0}^{N-1} |e_n^{\gamma}|$$
			we have that
			\begin{align*}
				\label{final}
				|\EE \left[f(x_N,v_N)- f(X_T,V_T)\right]| \leq 	C_{f, \epsilon}\left(  \Delta t + (\Delta t)^{ \nu(1-\epsilon)} 
				\right),
			\end{align*}
			which concludes the proof. 
	
	\bigskip

	\smallskip
	\subsection{The absorbed Euler scheme}
	All tools and auxiliary results have been derived simultaneously for the symmetrized and the absorbed Euler scheme. The only difference is that we have
	\begin{equation*}
		\begin{aligned}
			\hat{v}^{abs}_t=\text{ }v_{n(t)}^{abs} &+\int_{\eta(t)}^{t}\mathbbm{1}_{\{Z_s^{abs}>0\}}\kappa\left(\theta-v_{n(s)}^{abs}\right)ds+\sigma\int_{\eta(t)}^{t}\mathbbm{1}_{\{Z_s^{abs}>0\}}\sqrt{v_{n(s)}^{abs}}dW_s \\ & +\frac{1}{2}\left( L_t^0(Z^{abs})-L_{\eta(t)}^0(Z^{abs})\right)
		\end{aligned}
	\end{equation*}
	instead of 
	\begin{equation*}
		\begin{aligned}
			\hat{v}^{sym}_t=v_{n(t)}^{sym} & +\int_{\eta(t)}^{t} \sign\left(Z_s^{sym}\right)\kappa\left(\theta-v_{n(s)}^{sym}\right)ds+\sigma\int_{\eta(t)}^{t}\sign\left(Z_s^{sym}\right)\sqrt{v_{n(s)}^{sym}}dW_s \\ & +\left( L_t^0(Z^{sym})-L_{\eta(t)}^0(Z^{sym})\right).
		\end{aligned}
	\end{equation*}
	Besides this difference,  we can carry out the proof completely analogous.

	\bigskip
	\bigskip
	
	{\bf Acknowledgments.} \,\,	
	{\it The authors are very thankful to the referees for their insightful comments and remarks, which greatly helped to improve this manuscript.}
	{\it  Annalena Mickel has been supported by the DFG 
		Research Training Group 1953 "Statistical Modeling of Complex Systems".}

\end{document}